\newtheorem{theorem}{Theorem}
\newtheorem{lemma}[theorem]{Lemma}
\newtheorem{proposition}[theorem]{Proposition}
\newcommand{\Frac}{\hbox{\rm frac}}
\begin{document}
\newcommand{\Addresses}{{
\bigskip
\footnotesize
\medskip

Neil~Hindman, \textsc{Department of Mathematics, Howard University, Washington D.C., 20059, USA.}\par\nopagebreak\textit{Email address: }\texttt{nhindman@aol.com}

\medskip

Maria-Romina~Ivan, \textsc{Department of Pure Mathematics and Mathematical Statistics, Centre for Mathematical Sciences, Wilberforce Road, Cambridge, CB3 0WB, UK.}\par\nopagebreak\textit{Email address: }\texttt{mri25@dpmms.cam.ac.uk}

\medskip

Imre~Leader, \textsc{Department of Pure Mathematics and Mathematical Statistics, Centre for Mathematical Sciences, Wilberforce Road, Cambridge, CB3 0WB, UK.}\par\nopagebreak\textit{Email address: }\texttt{i.leader@dpmms.cam.ac.uk}
}
\medskip}
\pagestyle{fancy}
\fancyhf{}
\fancyhead [LE, RO] {\thepage}
\fancyhead [CE] {NEIL HINDMAN, MARIA-ROMINA IVAN AND IMRE LEADER}
\fancyhead [CO] {\small SOME NEW RESULTS ON MONOCHROMATIC SUMS AND PRODUCTS IN THE RATIONALS}
\renewcommand{\headrulewidth}{0pt}
\renewcommand{\l}{\rule{6em}{1pt}\ }
\title{\Large\textbf{SOME NEW RESULTS ON MONOCHROMATIC SUMS AND PRODUCTS IN THE RATIONALS}}
\author{NEIL HINDMAN, MARIA-ROMINA IVAN AND IMRE LEADER}
\date{}
\maketitle
\begin{abstract}
Our aim in this paper is to show that, for any $k$, there is a finite colouring of the set of rationals whose denominators
contain only the first $k$ primes such that no infinite set has all of its finite sums and products monochromatic.
We actually prove a `uniform' form of this: there is a finite colouring of the rationals with the property that no infinite set whose
denominators contain only finitely many primes has all of its finite sums and products monochromatic. We also give various other
results, including a new short proof of the old result that there is a finite colouring of the naturals such that no infinite set has all of its  
pairwise sums and products monochromatic.
\end{abstract}
\section{Introduction}
The Finite Sums Theorem \cite{FST} states that whenever the natural
numbers are finitely coloured there exists an infinite sequence
all of whose finite sums are the same colour. By considering just
powers of 2, this immediately implies the corresponding result for
products: whenever the naturals are finitely coloured there
exists a sequence all of whose products are the same
colour. But what happens if we want to combine sums and
products?
\par Hindman \cite{H} showed that one cannot ask for sums and products,
even just pairwise: there is a finite colouring of the naturals
for which no (injective) sequence has the set of all of its
pairwise sums and products monochromatic. The question of what
happens if we move from the naturals to a larger space 
is of especial interest. Bergelson, Hindman and Leader \cite{BHL}
showed that if we have a finite colouring of the reals with each
colour class measurable then there exist a sequence with the set
of all of its finite sums and products monochromatic. (They actually proved a
stronger statement: one may insist that the infinite sums are
the same colour as well.) However,
they also showed that there is a finite colouring of the dyadic
rationals such that no sequence has all of its finite
sums and products monochromatic. The questions of what happens in
general for finite colourings, in the rationals or the reals,
remain open.
\par The arguments in \cite{BHL} do not extend beyond the dyadics. Our aim
in this paper is to go further. Let $\mathbb Q_{(k)}$ denote the set of 
rationals whose denominators (in reduced form) involve only the first $k$ primes. Then we show that there is a finite colouring of $\mathbb Q_{(k)}$ such that no sequence has all of its
finite sums and 
products monochromatic.
\par In fact, we strengthen this result in two ways. First of all,
we insist that the number of colours does not grow with $k$, and
more importantly we give one colouring that `works for
all $\mathbb Q_{(k)}$ at once'. The actual statement is: there is a finite
colouring of the rationals such that no sequence for which the
set of primes that appear in the denominators is finite has the
set of its finite sums and products monochromatic. This is
really made up of two separate results: one about just pairwise 
sums, asserting that no such {\em bounded} sequence can have all of
its pairwise sums and products monochromatic, and the other about
general finite sums, saying that no such {\em unbounded} sequence can
have all of its finite sums and products monochromatic.

\par Our proofs are based on a careful analysis of the structure
of addition and multiplication in $\mathbb Q_{(k)}$, and also on a result
(Lemma~\ref{Marialemma1} below) about colouring pairs of naturals
that may be of independent interest. One 
application of this lemma is a new short proof of the
result of Hindman mentioned above, about pairwise sums
and products in the naturals.
\par We also prove various other related results. For example, we 
give a finite colouring of the reals such that no sequence that
is bounded and bounded away from zero can have its pairwise
sums and products monochromatic. 
\par The plan of the paper is as follows. In Section 2 we state and
prove our lemma about colouring pairs of naturals, and use it in Section 3 to give a new proof of the result about pairwise sums and products in the naturals. In Section 4 we give the above result about the reals, which we then build on in Section 5 to prove the statement about pairwise sums and products in 
bounded sequences. Amusingly, it is not entirely clear that
the colouring in Section 5 does not prevent monochromatic 
finite sums and products from {\em every} sequence in the 
rationals, and so we digress in Section 6 to exhibit such a
sequence for this colouring.  Finally in Section 7 we construct a colouring of the rationals such that if a sequence has the set of its finite sums and products monochromatic and the set of primes that appear in the denominators of its terms is finite, then the sequence has to be bounded -- together with the results of
Section 5 this establishes the main result.
\par Our notation is standard. We restrict our attention to the
positive rationals and the positive reals (which we write as $\mathbb Q^+$ and $\mathbb R^+$ respectively), since in all situations  
either it would
be impossible to use negative values (for example because the
sums are negative but the products are positive) or because, if
say we are dealing only with sums, then any colouring of the positive values could be reflected, using new colours, to the
negative values. Throughout the paper $\mathbb N$ is the set of positive integers.
\par We end this introduction by mentioning that in the case of
finite sequences very little is known. The question of
whether or not in every finite colouring of the naturals there
exist two (distinct) numbers that, together with their sum
and product, all have the same colour, remains tantalisingly
open. Moreira \cite{M} showed that we may find $x$ and $y$ such
that all of $x,x+y,xy$ have the same colour, and in the
rationals Bowen and Sabok \cite{MB} showed that we can indeed find the full
set $x,y,x+y,xy$. But for example for sums and products from
a set of size three or more almost nothing is known.
\section{Some useful lemmas}
In this section we prove the lemma mentioned above that we will make use of several
times (Lemma~\ref{Marialemma1}). We will also need two slight variants of it, namely Lemma~\ref{Marialemma2} and Lemma~\ref{Marialemma3}.
\begin{lemma}\label{Marialemma1}There exists a finite colouring $\Phi$ of $\mathbb N^{(2)}=\{(a,b)\in\mathbb N\times\mathbb N:a<b\}$ such that we cannot find two strictly increasing sequences of naturals, $(a_n)_{n\geq1}$ and $(b_n)_{n\geq1}$, such that $a_i<b_i$ for every $i$ and $\{(a_n+a_m,b_n+b_m):n<m\}\cup\{(a_n,b_m):n<m\}$ is monochromatic.
\end{lemma}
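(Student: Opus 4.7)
The plan is to define $\Phi$ as a product of two colourings of $\mathbb{N}^{(2)}$: a coarse layer $\Phi_1$ based on the dyadic scales of $a$ and $b$, and a fine layer $\Phi_2$ tracking specific binary-digit information. The coarse layer alone will force the would-be sequences to be ``dyadically sparse,'' and the fine layer will then detect the resulting structural asymmetry between mixed pairs and sum pairs.

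For the coarse layer, take $\Phi_1(a,b) = (\lfloor \log_2 a \rfloor \bmod K, \lfloor \log_2 b \rfloor \bmod K)$ for a suitable integer $K \geq 2$. Suppose for contradiction that strictly increasing sequences $(a_n)$ and $(b_n)$ with $a_i < b_i$ realise the forbidden configuration $\Phi$-monochromatically with common colour $c$. Monochromaticity of the mixed pairs $\{(a_n, b_m) : n < m\}$ under $\Phi_1$ forces $\lfloor \log_2 a_n \rfloor \equiv \alpha \pmod K$ and $\lfloor \log_2 b_m \rfloor \equiv \beta \pmod K$ for fixed residues $\alpha, \beta$. Monochromaticity of the sum pairs $\{(a_n+a_m, b_n+b_m) : n < m\}$, combined with the elementary fact that $\lfloor \log_2(a_n + a_m) \rfloor \in \{\lfloor \log_2 a_m \rfloor, \lfloor \log_2 a_m \rfloor + 1\}$ when $a_n < a_m$, then forces $\lfloor \log_2(a_n + a_m) \rfloor = \lfloor \log_2 a_m \rfloor$ (the ``$+1$'' case would alter the residue mod $K$), and hence $\lfloor \log_2 a_n \rfloor < \lfloor \log_2 a_m \rfloor$ with consecutive gaps of at least $K$. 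The same holds for the $b$-sequence, so both sequences are dyadically sparse: the binary expansion of $a_n + a_m$ (for $n < m$) matches that of $a_m$ in its top bits and receives $a_n$'s contribution only in lower-order positions.

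For the fine layer $\Phi_2(a,b)$, the natural candidate is to record the bit-pattern of $a$ in a window of positions just below its leading bit, and similarly for $b$. Then $\Phi_2$ evaluated on the mixed pair $(a_n, b_m)$ depends on the top bits of $a_n$ and of $b_m$, while on the sum pair $(a_n + a_m, b_n + b_m)$ it depends on the top bits of $a_m$ and of $b_m$. Imposing monochromaticity forces the top-bit patterns of the $a_n$ (and, independently, of the $b_n$) to be constant across the sequence. Combined with the dyadic-sparseness conclusion from the coarse step and the constraint $a_n < b_n$, this places very rigid restrictions on the sequences; the plan is then to eliminate the remaining possibilities by augmenting $\Phi_2$ with further bit-level invariants---for instance, bits of $a$ at positions determined by $\lfloor \log_2 b \rfloor$, or parities of Hamming weights in suitable windows---until no admissible sequence remains.

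The main obstacle will be the precise design of $\Phi_2$ and the verification that it actually closes the argument. The bit-pattern window must lie strictly above any carries that could propagate when $a_n$ is added into $a_m$, which is arranged by taking $K$ sufficiently large. More seriously, the mere rigidity of top-bit patterns does not on its own produce a contradiction, since one can imagine sequences whose top-bit pattern is constant across all $n$ (for instance, $a_n$ always with leading bits $10\cdots0$). Closing this gap calls for additional invariants in $\Phi_2$ that detect the insertion of $a_n$'s bits into the low-order region of $a_n + a_m$ and distinguish it from the corresponding low-order structure of the mixed pair; this is where careful combinatorial choices, in the spirit of Hindman-style colourings built from binary-digit structure, will be required.
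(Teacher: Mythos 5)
Your coarse layer is fine, and it is essentially a ``start-based'' analogue of what the paper does with ends: the paper records the position of the \emph{last} binary digit of each coordinate mod $2$, together with the digit immediately above it, and deduces that the $a_n$ (and the $b_n$) must all end at different positions, hence after passing to a subsequence have right-to-left disjoint supports. But your proposal then stops exactly where the lemma actually lives. Structural rigidity (strictly increasing leading positions with gaps $\geq K$, constant top-bit windows) is \emph{not} a contradiction, as you yourself observe with the example of all $a_n$ beginning $10\cdots 0$; to finish one needs an explicit invariant whose value provably differs between some mixed pair $(a_n,b_m)$ and some sum pair $(a_{n'}+a_{m'},\,b_{n'}+b_{m'})$. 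Your fine layer records data about the two coordinates separately, and such per-coordinate data can be made constant along perfectly consistent sequences, so ``augmenting $\Phi_2$ with further bit-level invariants until no admissible sequence remains'' is a promissory note for the missing idea rather than a proof. The paper's missing ingredient is a \emph{cross-coordinate} bit: $c_5(a,b)$ records whether the lowest set bit of $b$ lies above the highest set bit of $a$. Once the $a_n$ and the $b_n$ have disjoint supports, choose $n$ large: the mixed pair $(a_1,b_n)$ has $c_5=0$, while the sum pair $(a_1+a_n,\,b_1+b_n)$ has $c_5=1$, because $a_1+a_n$ starts at the start of $a_n$ (large) whereas $b_1+b_n$ ends at the end of $b_1$ (fixed). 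No invariant of this comparative type appears anywhere in your colouring, and without one the argument does not close.

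There is also a concrete technical error en route: the claim that taking $K$ large forces the top window of $a_m$ to lie ``strictly above any carries'' when $a_n$ is added is false. A carry can propagate through an arbitrarily long run of $1$s of $a_m$, no matter how far below $a_m$'s leading bit the number $a_n$ sits (take $a_m=2^{N}+2^{N-2}+2^{N-3}+\cdots+2^{j}$ and $a_n$ with leading bit $j$: the window just below position $N$ changes). So even the rigidity you want from $\Phi_2$ is not established as stated. The paper sidesteps carries entirely by working at the bottom of the numbers, where the position of the last digit together with the single digit above it determines the carry behaviour of a sum exactly; if you want to keep a start-based scheme you would need a different mechanism (such as first forcing full support-disjointness, which your leading-position information alone does not give, since it says nothing about where the numbers end).
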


The way this will be of use to us is, roughly speaking, as follows. Suppose that we are trying to show that a certain kind of sequence cannot have its pairwise sums and products monochromatic (in the sense that there is a colouring that prevents this). Then it is enough to find two `parameters' $a$ and $b$ so that when we multiply two terms of the sequence the $a$-values and the
$b$-values add, but when we add two terms the resulting $a$-value is the $a$-value of the earlier term and the $b$-value
is the $b$-value of the later term.

Before starting the proof, we need a little notation. When a natural number is
written in binary we call the rightmost 1 the `last digit' of the number (the end), and the leftmost 1 the `first digit' of the number (the start). So for example the
number $10001010$ has start 7 and end 1. Also, we say that natural numbers $a$ and
$b$ are `right to left disjoint' if the end of $b$ is greater than the start of $a$.

\begin{proof} We colour a pair $(a,b)$ by $(c_1, c_2, c_3, c_4, c_5)$, where $c_1$ is the position of the last digit of $a \text{ mod 2 }$, $c_2$ is the position of the last digit of $b \text{ mod 2 }$, and $c_3$ and $c_4$ are the digits immediately to the left of the last digits of $a$ and $b$ respectively. Finally $c_5$ is 0 if the supports of $a$ and $b$ are right to left disjoint, and 1 otherwise.\par Suppose for a contradiction that we can find two sequences $(a_n)_{n\geq 1}$ and $(b_n)_{n\geq 1}$ as given in the statement of the lemma. Assume that for some $n<m$, $a_n$ and $a_m$ end at the same position. Say that position is $i$. Because $(a_n, b_m)$ and $(a_m, b_{m+1})$ have to have the same colour, it follows that $a_n$ and $a_m$ have the same last 2 digits. This implies that the position of the last digit of $a_n+a_m$ is $i+1$. On the other hand $(a_n, b_m)$ and $(a_n+a_m, b_n+b_m)$ must have the same colour, but they have a different $c_1$, a contradiction. Therefore we know that all $a_n$ have to end at different positions. By passing to subsequences, we may assume that the $a_n$ have pairwise right to left disjoint supports.\par Since $(a_n, b_m)$ and $(a_{n-1}, b_n)$ have the same colour, the same argument as above shows that for any $1<n<m$, $b_n$ and $b_m$ must end at different positions. Thus by passing to subsequences we may assume that both $a_n$ have right to left disjoint supports and $b_n$ have right to left disjoint supports.\\Finally, we can choose $n$ large enough that $a_1$ and $b_n$ have right to left disjoint supports and $b_1$ and $a_n$ have right to left disjoint supports. Thus $c_5=0$ for the pair $(a_1, b_n)$, but $c_5=1$ for the pair $(a_1+a_n,b_1+b_n)$ (as the right-hand side starts before the left-hand side finishes), a contradiction.
\end{proof}

We will also need two slight variants of this lemma.

\begin{lemma}\label{Marialemma2}There exists a finite colouring $\Psi$ of $\mathbb N^{(2)}$ such that we cannot find two strictly increasing sequences of naturals, $(a_n)_{n\geq1}$ and $(b_n)_{n\geq1}$, such that $a_i<b_i$ for every $i$ and $\{(a_n+a_m+1,b_n+b_m):n<m\}\cup\{(a_n,b_m):n<m\}$ is monochromatic.
\end{lemma}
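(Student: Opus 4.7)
The plan is to reduce Lemma~\ref{Marialemma2} to Lemma~\ref{Marialemma1} by absorbing the stray $+1$ via a shift of the first coordinate. If I set $\alpha_n := a_n + 1$, then $\alpha_n + \alpha_m = (a_n + a_m + 1) + 1$, so applying the coloring $\Phi$ from Lemma~\ref{Marialemma1} to the pair $(a+1, b)$ in place of $(a, b)$ converts the Lemma~\ref{Marialemma2} sum pair $(a_n + a_m + 1, b_n + b_m)$ into the Lemma~\ref{Marialemma1} sum pair $(\alpha_n + \alpha_m, b_n + b_m)$. I therefore define $\Psi(a, b) := \Phi(a+1, b)$ whenever $a + 1 < b$, and assign a distinct extra color ``E'' to pairs with $a + 1 = b$ (the only remaining possibility, given $a < b$).

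Suppose for a contradiction that strictly increasing sequences $(a_n)$, $(b_n)$ with $a_n < b_n$ produce a $\Psi$-monochromatic set of the form forbidden by the lemma. If the common color is ``E'', then $a_n + 1 = b_m$ for every $n < m$; fixing $n = 1$ forces $b_m = a_1 + 1$ for all $m > 1$, contradicting the strict monotonicity of $(b_m)$. Otherwise the common color is some $\Phi$-color, and avoiding ``E'' on every sum pair forces $a_n + a_m + 2 < b_n + b_m$, equivalently $(b_n - a_n) + (b_m - a_m) \geq 3$, for every $n < m$. In particular at most one index $n$ can satisfy $b_n - a_n = 1$, so by passing to a tail I may assume $b_n - a_n \geq 2$ throughout. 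Then $\alpha_n < b_n$, the $(\alpha_n)$ are strictly increasing, and for all $n < m$
\[
\Phi(\alpha_n, b_m) = \Psi(a_n, b_m) = \Psi(a_n + a_m + 1, b_n + b_m) = \Phi(\alpha_n + \alpha_m, b_n + b_m),
\]
producing a $\Phi$-monochromatic configuration of precisely the type forbidden by Lemma~\ref{Marialemma1}, a contradiction.

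The only real obstacle to this reduction is the edge case $b_n = a_n + 1$: without the extra color, $\alpha_n = b_n$ would break the Lemma~\ref{Marialemma1} hypothesis $\alpha_n < b_n$. The ``E'' color is designed exactly to handle this---either the common color is ``E'' (immediately impossible by monotonicity of $(b_m)$) or it is a $\Phi$-color, in which case no sum pair $(a_n + a_m + 1, b_n + b_m)$ can have first coordinate equal to second minus one, and this in turn eliminates all but at most one $n$ with $b_n - a_n = 1$, letting the shift go through after passing to a tail.
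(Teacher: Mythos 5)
Your proof is correct, and at its core it is the same reduction the paper uses: absorb the stray unit by shifting one coordinate by $1$ and invoke Lemma~\ref{Marialemma1}. The difference is which coordinate gets shifted. The paper defines $\Psi(a,b)=\Phi(a,b+1)$ and replaces $b_n$ by $d_n=b_n+1$; since $a<b$ implies $a<b+1$, this shift never leaves the domain $\mathbb N^{(2)}$, so no extra colour or edge-case analysis is needed. You shift the first coordinate, $\Psi(a,b)=\Phi(a+1,b)$, which creates the boundary case $b=a+1$; your extra colour ``E'', the observation that a monochromatic family in colour E would force $(b_m)$ to be constant, and the passage to a tail on which $b_n-a_n\geq 2$ handle this cleanly, so the argument is complete. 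It is worth noting that your shift is the one that matches the statement as printed: with sum pairs $(a_n+a_m+1,b_n+b_m)$ you get exactly $\Phi(\alpha_n+\alpha_m,b_n+b_m)$ with $\alpha_n=a_n+1$, whereas the paper's computation with $d_n=b_n+1$ produces $\Psi(a_n+a_m,b_n+b_m+1)=\Phi(a_n+a_m,d_n+d_m)$, i.e.\ it treats the variant in which the $+1$ sits on the second coordinate (which is the form actually used in the proof of Theorem~\ref{theoremN}, where $s_2(x_nx_m)$ can be $b_n+b_m+1$). So your argument proves the lemma literally as stated, at the modest cost of one extra colour, while the paper's second-coordinate shift avoids the boundary issue entirely and directly yields the mirrored variant needed later.
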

\begin{proof}
Let $\Phi$ be the colouring in Lemma~\ref{Marialemma1}. Define $\Psi$ by $\Psi(a,b)=\Phi(a,b+1)$. Suppose we can find sequences $(a_n)_{n\geq 1}$ and $(b_n)_{\geq1}$ with the above properties. Let $d_n=b_n+1$. Then for $n<m$ we have $\Phi(a_n, d_m)=\Phi(a_n,b_m+1)=\Psi(a_n,b_m)$ and $\Phi(a_n+a_m,d_n+d_m)=\Phi(a_n+a_m,b_n+b_m+2)=\Psi(a_n+a_m,b_n+b_m+1)$, contradicting Lemma~\ref{Marialemma1}.
\end{proof}
The next lemma is proved in a completely analogous manner; we omit the proof.
\begin{lemma}
\label{Marialemma3}
There exists a finite colouring $\Psi'$ of $\mathbb N^{(2)}$ such that we cannot find two strictly increasing sequences of natural numbers, $(a_n)_{n\geq1}$ and $(b_n)_{n\geq1}$, such that $a_i<b_i$ for every $i\geq1$ and $\{(a_n+a_m-1, b_n+b_m):n<m\}\cup\{(a_n, b_m):n<m\}$ is monochromatic.\hfill$\square$\end{lemma}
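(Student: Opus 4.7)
The plan is to mimic the proof of Lemma~\ref{Marialemma2}, but with the shift applied to the first coordinate instead of the second, and in the opposite direction. I will take $\Phi$ to be the colouring from Lemma~\ref{Marialemma1} and define $\Psi'$ on $\mathbb{N}^{(2)}$ by $\Psi'(a,b)=\Phi(a-1,b)$ whenever $a\geq 2$, while setting $\Psi'(1,b)=*$ for a fresh dummy colour $*$ not appearing in the range of $\Phi$. This gives a genuine finite colouring of $\mathbb{N}^{(2)}$.

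Supposing for contradiction that strictly increasing sequences $(a_n)$, $(b_n)$ with $a_i<b_i$ make the set in the statement $\Psi'$-monochromatic, I will first rule out the possibility that the common colour is $*$: if it were, then $\Psi'(a_1+a_2-1,b_1+b_2)=*$ would force $a_1+a_2=2$, which is impossible since $a_1<a_2$ are positive integers. Hence $\Psi'(a_n,b_m)\neq *$ for all $n<m$, which forces $a_n\geq 2$ for every $n$.

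With that in hand I will set $c_n=a_n-1$ and $d_n=b_n$, so that $(c_n)$ and $(d_n)$ are strictly increasing positive integer sequences with $c_n<d_n$ (since $a_n<b_n$). A direct substitution then gives
\[
\Psi'(a_n,b_m)=\Phi(c_n,d_m),\qquad \Psi'(a_n+a_m-1,b_n+b_m)=\Phi(c_n+c_m,d_n+d_m),
\]
so the assumed $\Psi'$-monochromatic set corresponds precisely to a $\Phi$-monochromatic set of exactly the form forbidden by Lemma~\ref{Marialemma1}, giving the desired contradiction. There is no real obstacle here — once the $a=1$ boundary is handled via the dummy colour, the proof is a mechanical translation of the Lemma~\ref{Marialemma2} argument.
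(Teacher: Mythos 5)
Your proof is correct and is essentially the argument the paper has in mind: just as Lemma~\ref{Marialemma2} is deduced from Lemma~\ref{Marialemma1} by shifting the second coordinate, you deduce Lemma~\ref{Marialemma3} by setting $\Psi'(a,b)=\Phi(a-1,b)$ and shifting the first coordinate, which is exactly the ``completely analogous'' omitted proof (and matches the $\Psi'$ the paper later defines in Section~5). Your use of a fresh dummy colour at $a=1$, together with the short argument ruling that colour out, is a harmless variant of the paper's choice of assigning $\Phi(1,2)$ there, and the rest of your translation to Lemma~\ref{Marialemma1} is sound.
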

\par Finally, we note a simple fact that we will use repeatedly. 
\begin{lemma}
\label{2colourslemma}
There exists a finite colouring $\varphi:\mathbb Z\to\{0,1\}$ such that $\varphi(k+1)\neq \varphi(2k)$ and
$\varphi(k+1)\neq \varphi(2k+1)$ for all $k\notin\{0,1\}$, and $\varphi(0)\neq\varphi(1)$ and $\varphi(2)\neq\varphi(3)$.
\end{lemma}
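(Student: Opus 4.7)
The plan is to define $\varphi$ by recursion on $|n|$. First, I fix the base case on $\{0,1,2,3\}$ by choosing, for instance, $\varphi(0)=\varphi(2)=0$ and $\varphi(1)=\varphi(3)=1$; this immediately enforces the last two stipulations $\varphi(0)\neq\varphi(1)$ and $\varphi(2)\neq\varphi(3)$.

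For every integer $n\notin\{0,1,2,3\}$, I observe that $n$ is uniquely of the form $2k$ or $2k+1$ with $k=\lfloor n/2\rfloor$, and this $k$ automatically lies outside $\{0,1\}$ — otherwise $n$ itself would be in $\{0,1,2,3\}$. The two required inequalities $\varphi(k+1)\neq\varphi(2k)$ and $\varphi(k+1)\neq\varphi(2k+1)$ then leave exactly one possibility with only two colours available, so I define $\varphi(n):=1-\varphi(k+1)$.

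To justify this recursion, I would verify that $|\lfloor n/2\rfloor+1|<|n|$ for every such $n$, which ensures $\varphi(\lfloor n/2\rfloor+1)$ has already been defined. This is immediate for $|n|\geq 4$ from $|\lfloor n/2\rfloor+1|\leq |n|/2+1<|n|$, and the small negative cases $n\in\{-1,-2,-3\}$ are handled in order: the $k=-1$ constraints give $\varphi(-1)=\varphi(-2)=1-\varphi(0)$, and then the $k=-2$ constraint gives $\varphi(-3)=1-\varphi(-1)$.

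Consistency is automatic because each $n\notin\{0,1,2,3\}$ arises from a single $k$, so the two lemma constraints indexed by that $k$ both force $\varphi(n)$ to the common value $1-\varphi(k+1)$ and no conflict can occur. There is no substantive obstacle here; the whole content of the lemma reduces to a well-definedness check for a straightforward two-colour recursion driven by the map $n\mapsto\lfloor n/2\rfloor+1$, which strictly decreases $|n|$ outside the base region.
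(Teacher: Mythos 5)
Your proposal is correct and is essentially the paper's proof: the same base values $\varphi(0)=\varphi(2)=0$, $\varphi(1)=\varphi(3)=1$ and the same forced rule $\varphi(2k)=\varphi(2k+1)=1-\varphi(k+1)$, with well-definedness coming from the fact that $k+1$ is already coloured. The only difference is bookkeeping — you organise the induction as a strong recursion on $|n|$ via $n\mapsto\lfloor n/2\rfloor+1$, while the paper grows the interval $\{2l+2,\dots,2k-1\}$ outward on both sides — which is an immaterial variation.
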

\begin{proof}
We build $\varphi$ inductively. Let $\varphi(0)=\varphi(2)=0$ and $\varphi(1)=\varphi(3)=1$. We now assume that $l\leq -1$, $k\geq 2$ and that $\varphi$ has been defined on $\{2l+2,2l+3,\cdots,2k-1\}$. Since
$0<k+1\leq 2k-1$, $\varphi(k+1)$ is defined, thus we set $\varphi(2k)=\varphi(2k+1)=1-\varphi(k+1)$. Similarly, since $2l+2\leq l+1\leq0$, $\varphi(l+1)$ is defined, so we set $\varphi(2l)=\varphi(2l+1)=1-\varphi(l+1)$, which finishes the induction step.
\end{proof}
\section{Colouring the naturals}

To illustrate the usefulness of Lemma~\ref{Marialemma1}, we use
it here to give a short proof of the result of Hindman \cite{H} 
about pairwise sums and products in the naturals. Because of the use of 
Lemma~\ref{Marialemma1}, what we are really doing is analysing the positions
of the digits in binary of the numbers that are themselves the positions of 
the digits in binary of the terms of the sequence.

For a natural number $a$, we write $e_2(a)$ for the end of $a$ (the subscript is
because later we will be looking at non-binary bases) and $s_2(a)$ for the start of
$a$. We also write $g_a$ for the difference between the positions of the two
most significant 1s of $a$ in binary, and call it the `gap' or `left gap' of $a$.
Thus for example $10001010$ has gap 4.

\begin{theorem}\label{theoremN}
There exists a finite colouring $\theta$ of $\mathbb N$ such that there is no injective sequence $(x_n)_{n\geq1}$ of natural numbers with the property that all numbers $x_n+x_m$ and $x_n x_m$ for all $1\leq n<m$ have the same colour.
\end{theorem}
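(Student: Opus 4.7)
The plan is to apply Lemma~\ref{Marialemma1} (and the variants in Lemmas~\ref{Marialemma2}, \ref{Marialemma3}, together with a finite family of obvious coordinate-shifted analogues proved identically) with parameters $a_n = e_2(x_n)$ and $b_n = s_2(x_n)$. The key computation is: after passing to a subsequence on which $(e_2(x_n))$ and $(s_2(x_n))$ are strictly increasing, we have for $n<m$
\[
e_2(x_n + x_m) = e_2(x_n), \qquad e_2(x_n x_m) = e_2(x_n) + e_2(x_m)
\]
exactly (the end of $x_n$ wins in the sum, and multiplication of the odd parts remains odd), while $s_2(x_n+x_m) \in \{s_2(x_m), s_2(x_m)+1\}$ and $s_2(x_n x_m) \in \{s_2(x_n) + s_2(x_m),\, s_2(x_n) + s_2(x_m) + 1\}$, the $\pm 1$ terms coming from possible top-level carries. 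Modulo these ambiguities, $(a_n, b_n)$ sits in exactly the configuration that Lemma~\ref{Marialemma1} rules out.

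I would take $\theta$ to be the finite product of: $\varphi \circ e_2$ (where $\varphi$ is the colouring of Lemma~\ref{2colourslemma}), which handles the case where $e_2(x_n)$ is constant on a subsequence; and the colourings $\Phi, \Psi, \Psi'$ (and their coordinate-swapped analogues) applied to the pair $(e_2(x), s_2(x))$. Arguing by contradiction, let $(x_n)$ be an injective sequence whose pairwise sums and products are $\theta$-monochromatic. By pigeonhole one may assume $(e_2(x_n))$ and $(s_2(x_n))$ are each constant or strictly increasing to infinity; constant $s_2$ is ruled out since each level set $\{x : s_2(x) = s\}$ is finite. In the \emph{main case} (both strictly increasing), Ramsey's theorem on pairs, applied to the four-colouring $(n,m) \mapsto (s_2(x_n+x_m) - s_2(x_m),\, s_2(x_n x_m) - s_2(x_n) - s_2(x_m)) \in \{0,1\}^2$, lets us assume the two carry offsets $(\delta_1, \delta_2)$ are constant, and exactly one of Lemmas~\ref{Marialemma1}--\ref{Marialemma3} (or one of their shifted analogues) then produces a contradiction.

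In the \emph{constant-end case} $e_2(x_n) = e$, writing $x_n = 2^e(2y_n + 1)$ and pigeonholing on the parity of $y_n$ yields $e_2(x_n x_m) = 2e$ and $e_2(x_n + x_m) = e + 1$. The $\varphi \circ e_2$ component then forces $\varphi(e+1) = \varphi(2e)$, which contradicts Lemma~\ref{2colourslemma} for $e \ge 2$, and follows from $\varphi(0) \ne \varphi(1)$ for $e = 0$. The borderline $e = 1$ (where $\varphi(e+1) = \varphi(2e) = \varphi(2)$ gives no immediate contradiction) is the most delicate and the main obstacle; I would resolve it by combining the pair-colouring components of $\theta$ with one further pigeonhole on a low-order bit of $y_n$, exploiting that an injective integer sequence cannot have all terms agreeing $2$-adically to arbitrary precision. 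The remaining effort is the bookkeeping needed to capture all four configurations $(\delta_1,\delta_2)$ by a single finite product colouring.
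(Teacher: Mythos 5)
Your overall skeleton matches the paper's: the parameters are $a_n=e_2(x_n)$, $b_n=s_2(x_n)$, the exact identities $e_2(x_n+x_m)=e_2(x_n)$ and $e_2(x_nx_m)=e_2(x_n)+e_2(x_m)$ together with the $\pm1$ carry ambiguity in $s_2$ are the right computations, and your main case (both $e_2$ and $s_2$ strictly increasing, Ramsey to freeze the two carries, then Lemma~\ref{Marialemma1} and shifted variants in the spirit of Lemmas~\ref{Marialemma2} and~\ref{Marialemma3}) is a legitimate, in fact slightly streamlined, version of the paper's endgame (the paper instead forces right-to-left disjoint supports so that no carries occur, then invokes Lemma~\ref{Marialemma1} or~\ref{Marialemma2}). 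The constant-end case with $e\neq 1$ via $\varphi\circ e_2$ is also exactly the paper's use of Lemma~\ref{2colourslemma}.

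However, the case you flag as ``the main obstacle'', $e_2(x_n)\equiv 1$ (all terms $\equiv 2 \bmod 4$), is a genuine gap, and it is precisely where the paper has to do real extra work. In that case every pairwise sum and product ends at position $2$, so $\varphi\circ e_2$ gives nothing; and the pairs $(e_2,s_2)$ of sums and products have constant first coordinate $2$, so the $\Phi,\Psi,\Psi'$ components can easily all be constant (e.g.\ if the starts are suitably spread out, the induced colouring $b\mapsto\Phi(2,b)$ only sees the bottom two binary digits of $b$ and a disjointness flag, and one can make all of $b_m$ and $b_n+b_m$ agree on these). So the colouring you describe simply cannot detect anything here, and ``one further pigeonhole on a low-order bit of $y_n$'' is not a resolution: pigeonholing on low bits only makes sums and products look more alike near the bottom. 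What the paper does is add new invariants to $\theta$ --- the parity of the gap between the two lowest set bits (its $g_a\bmod 2$ and $t_a$ components) --- and then argue that if $x_n=2+2^i+\cdots$, $x_m=2+2^j+\cdots$ with $2<i<j$, the sum has gap $i-2$ while the product has gap $i-1$, with separate sub-cases when all gaps coincide and when the common gap equals $1$ (handled by a further pigeonhole on the digit in position $g'+2$, resp.\ position $3$). Some such extra colour component and case analysis is unavoidable, and your proposal neither includes it nor indicates it. Two smaller points: your lemma application needs $e_2(x)<s_2(x)$, so powers of $2$ must be excluded (the paper adds an indicator component $p_a$; a sequence of powers of $2$ is not obviously killed by your components), and note that the ``$+1$'' variant you actually need for the product carry is the one with the shift in the \emph{second} coordinate, i.e.\ $\{(a_n+a_m,b_n+b_m+1)\}$ --- which is what the proof of Lemma~\ref{Marialemma2} establishes --- so your ``shifted analogues'' must be set up in that form.
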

\begin{proof} We begin by extending the colouring $\Phi$ from 
Lemma~\ref{Marialemma1} 
to a colouring of $(\mathbb N\cup\{0\})\times(\mathbb N\cup\{0\})$ by setting $\Phi(a,b)$ to be 0 if $a=0$ or $b=0$ or $a\geq b$. Now let $a$ be a natural number. We define $$\theta(a)=(p_a, e_2(a)\text{ mod }2,g_a\text{ mod }2,\Phi(e_2(a),s_2(a)),\Phi(e_2(a),s_2(a)+1),\varphi((e_2(a)),t_a)$$ where $p_a$ is 1 if $a$ is a power of 2 and 0 otherwise, and $t_a=0$ if $g_a=1$ and 1 otherwise. Observe that $\varphi$ ensures that there are no two numbers $a$ and $b$ of the same colour such that their end positions are $i+1$ and $2i$ respectively, for some $i\neq 1$.\par Suppose for a contradiction that there exists a strictly increasing sequence $(x_n)_{n\geq1}$ such that all pairwise sums and products have the same colour with respect to $\theta$. We observe that the first component of the colouring tells us that we cannot have two distinct powers of 2 in our sequence, and so we may assume that no term is a power of 2. Let $a_n$ be the position of the last digit of $x_n$ 
(i.e.~$a_n=e_2(x_n)$). Note
that the position of the last digit of $x_nx_m$ is $a_n+a_m$. Similarly, let $b_n$ be the position of the first digit of $x_n$ (i.e.~$b_n=s_2(x_n)$). We know that there will either be infinitely many $x_n$ such that $x_n<2^{b_n}\sqrt 2$, or infinitely many $x_n$ such that $x_n>2^{b_n}\sqrt 2$. By passing to a subsequence we may assume that either $x_n<2^{b_n}\sqrt 2$ for all $n$, or $x_n>2^{b_n}\sqrt2$ for all $n$. In the first case, the position of the first digit of $x_nx_m$ is $b_n+b_m$, while in the second case it is $b_n+b_m+1$.\par Assume first that all elements of the sequence end at position $1$. We either have infinitely many terms with the same gap, or infinitely many terms with pairwise distinct gaps. If the latter is true we may assume that $(x_n)_{n\geq1}$ has pairwise distinct gaps. Therefore we can find two $m$ and $n$ such that $x_n=2+2^i+\cdots$ and $x_{m}=2+2^{j}+\cdots$ where $2<i<j$. In this case the gap of the sum is $i-2$, while the gap of the product is $i-1$, a contradiction. Therefore we may assume that all $x_n$ end at position 1 and they have the same gap $g'$.\par If $g'>1$ then by the pigeonhole principle (and passing to a subsequence) we may assume that all terms have the same digit in position $g'+2$. Now it is easy to see that the sum of any two terms has gap $g'$, while the product has gap $g'+1$, a contradiction.
Hence we must have $g'=1$.
\par In other words, we may assume that all terms end $2+2^2+\cdots$, and by the pigeonhole principle we may further assume that the digit in position 3 is the same for all terms. A simple computation shows that the sum of any two terms has gap 1, while the product does not, a contradiction.\par This shows that we must have infinitely many terms that do not end at position $1$. Then,  by passing to a subsequence, we may assume that no term of the sequence ends at position $1$. If two terms $x_n$ and $x_m$ end at the same position, say $i\neq1$, then they cannot have the same gap. Indeed, if that were the case, the position of the last digit of $x_n+x_m$ is $i+1$, while the position of the last digit of $x_nx_m$ is $2i$, a contradiction. Thus we have $x_n=2^i+2^{i+k_1}+\cdots$ and $x_m=2^i+2^{i+k_2}\cdots$ for some $0<k_1<k_2$ (without loss of generality). The gap of the product is $k_1$. If $k_1\neq 1$ then the gap of the sum is $k_1-1$, a contradiction. But among any three terms that have the same end positions (and thus different gaps), we must always have two with gaps not equal to $1$. In other words, for any end position there are at most two terms that end there. By passing to a subsequence we may assume that the terms have right to left disjoint supports.
\par To sum up, by passing to a subsequence, we may assume that the terms $x_n$ are strictly increasing and have pairwise left to right disjoint supports. Thus the start and end positions form two increasing sequences, and since for $n<m$  we have $e_2(x_n+x_m)=a_n$ and $s_2(x_n+x_m)=b_m$, we are done by Lemma~\ref{Marialemma1} or Lemma~\ref{Marialemma2}.\end{proof}
\section{Colouring the reals}

In this section we prove the result about the reals mentioned in the
introduction, that there is a colouring for which no sequence that is bounded and bounded away from zero has all of its pairwise sums
and products monochromatic. There is a fair amount of notation,
which will also be used in later sections, but all of it is very
simple and self-explanatory. The aim is to analyse carefully how the `starting'
few 1s (in binary) of the numbers behave, and especially how close together
those first few 1s are.

\par For $x\in\mathbb R^+$, we define $a(x)$ to be the unique integer such that 
$2^{a(x)}\leq x<2^{a(x)+1}$. Moreover, for $x\in\mathbb R^+\setminus\{2^k:k\in\mathbb Z\}$,
we define $b(x)=a(x-2^{a(x)})$. In other words, for $x$ not an integer power of 2, $b(x)$ is the unique integer such that $2^{a(x)}+2^{b(x)}\leq x<2^{a(x)}+2^{b(x)+1}$. For $x\in\mathbb R^+\setminus\{2^k:k\in\mathbb Z\}$ we also define $c(x)$ to be the unique integer such that $2^{a(x)+1}-2^{c(x)+1}\leq x<2^{a(x)+1}-2^{c(x)}$.
\par Note that if $x\in\mathbb N$ then $a(x)$ is what we called the start of $x$ in Section 2 and Section 3. If $x$ is not a power of 2, then $b(x)$ is the position of the second most significant digit 1 in the base 2 expansion of $x$, and $c(x)$ is the position of the leftmost zero when $x$ is written in binary without leading $0$s. 
\par We now define $A_0=\{x\in\mathbb R^+:2^{a(x)}<x<2^{a(x)+\frac{1}{2}}\}$, $A_1=\{x\in\mathbb R^+:2^{a(x)+\frac{1}{2}}< x<2^{a(x)+1}\}$, $C_1=\{2^k:k\in\mathbb Z\}$ and $C_2=\{2^{k+\frac{1}{2}}:k\in\mathbb{Z}\}$. We observe that $A_0, A_1, C_1$, and $C_2$ are pairwise disjoint sets that partition $\mathbb R^+$, and $A_0$ and $A_1$ are open in $\mathbb R^+$, while $C_1$ and $C_2$ are countable.
\par Recalling the colouring $\varphi$ in Lemma~\ref{2colourslemma}, define $G_i=\{x\in\mathbb R^+\setminus C_1:\varphi(a(x))=i\}$ for $i\in\{0,1\}$. Since $G_i$ is the union of all the open intervals $(2^k,2^{k+1})$ where $k\in\mathbb Z$ and $\varphi(k)=i$, we see that $G_i$ is open in $\mathbb R^{+}$. Moreover, $C_1$, $G_0$ and $G_1$ also form a partition of the positive reals, where $C_1$ is countable and $G_0$ and $G_1$ are open.
\par Next we define $C_3=\{2^k+2^l:k,l\in\mathbb Z$ and $l<k\}$, and $H_i=\{x\in\mathbb R^+\setminus (C_1\cup C_3): a(x)-b(x)\equiv i\mod3\}$ for $i\in\{0,1,2\}$. By writing $H_i$ as the union of all open intervals $(2^k+2^l,2^k+2^{l+1})$ where $k,l\in\mathbb Z$, $l<k$ and $k-l\equiv i\mod 3$, we have that $H_i$ is open in $\mathbb R^+$ for $i\in\{0,1,2\}$. As before, $C_1$, $C_3$, $H_0$, $H_1$ and $H_2$ partition the positive reals.
\par Define now $C_4=\{2^k-2^l:k,l\in\mathbb Z$ and $l<k\}$, and $J_i=\{x\in\mathbb R^+\setminus C_4: a(x)-c(x)\equiv i\mod3\}$ for $i\in\{0,1,2\}$. Note that $C_1\subset C_4$ and $C_3\cap C_4=\{2^{k+1}+2^k:k\in\mathbb Z\}\neq\emptyset$. By writing $J_i$ as the union of all open intervals $(2^{k+1}-2^{l+1},2^{k+1}-2^{l})$ where $k,l\in\mathbb Z$, $l<k$ and $k-l\equiv i\mod 3$, we see that $J_i$ is open in $\mathbb R^+$ for $i\in\{0,1,2\}$. Also, $C_4$, $J_0$, $J_1$ and $J_2$ partition the positive reals.
\par Finally, we define $C_5=\{2^{k+1}(1-2^{l-k})^{\frac{1}{2}}:k,l\in\mathbb Z$ and $l<k\}$, and $B_i=\{x\in\mathbb R^+\setminus(C_1\cup C_5):x<2^{a(x)+1}(1-2^{c(x)-a(x)})^{\frac{1}{2}}$ and 
$a(x)-c(x)\equiv i\mod3$, or $x>2^{a(x)+1}(1-2^{c(x)-a(x)})^{\frac{1}{2}}$ and
$a(x)-c(x)\equiv i+1\mod3\}$ for $i\in\{0,1,2\}$. Note that $C_2\subset C_5$. Since $B_i$ can be written as the union of all the sets of the form $(2^{k+1}-2^{l+1}, 2^{k+1}(1-2^{l-k})^{\frac{1}{2}})$ where $l,k\in\mathbb Z$, $l<k$ and $k-l\equiv i\mod 3$, and all the sets of the form $(2^{k+1}(1-2^{l-k})^{\frac{1}{2}},2^{k+1}-2^l)$ where $k,l\in\mathbb Z$, $l<k$ and $k-l\equiv i+1\mod3$, we see that $B_i$ is open in $\mathbb R^+$ for all $i\in\{0,1,2\}$. Also, $C_1$, $C_5$, $B_0$, $B_1$ and $B_2$ partition the positive reals.
\par We are now ready to define our colouring $\nu$. To start with, we let $C_1$, $C_2$, $C_3\setminus C_4$, $C_4\setminus C_1$ and $C_5\setminus C_2$ be five colour classes of $\nu$. If $x\in\mathbb R^+\setminus(C_1\cup C_2\cup C_3\cup C_4\cup C_5)$, then we set $\nu(x)=(w_1,w_2,w_3,w_4,w_5)$, where $w_i=i$ if $x\in A_i$, $w_2=i$ if $x\in G_i$, $w_3=i$ if $x\in H_i$, $w_4=i$ if $x\in J_i$ and $w_5=i$ if $x\in B_i$.
\par It is important to note that, with the exception of the five countable classes defined first, the colour classes of $\nu$ are open (as a consequence of $C_1\cup\cdots\cup C_5$ being closed).
\begin{theorem}\label{0infinity}
Let $(x_n)_{n\geq1}$ be an injective sequence of positive reals with the property 
that all numbers $x_n+x_m$ and $x_n x_m$ for all $1\leq n<m$ have the same colour. 
Then $(x_n)_{n\geq1}$ cannot
be bounded and bounded away from zero.
\end{theorem}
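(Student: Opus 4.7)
Assume for contradiction that $(x_n)$ is bounded and bounded away from zero, with all pairwise sums and products of the same $\nu$-colour. Boundedness means $a(x_n)$ takes only finitely many values, so pigeonhole provides a subsequence on which $a(x_n)=a$ is constant and (away from the countable classes) each colour component $w_1,\ldots,w_5$ is constant. The five countable classes $C_1,\ldots,C_5$ are dispatched by direct structural arguments: $C_1$ and $C_2$ contain only finitely many points in a bounded interval; and if $(x_n)\subset C_3\setminus C_4$ then each $x_n=2^a+2^{l_n}$ with distinct $l_n$, but the pairwise sum $2^{a+1}+2^{l_n}+2^{l_m}$ then has three distinct binary digits and so does not lie in $C_3$, a contradiction. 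The cases $C_4\setminus C_1$ and $C_5\setminus C_2$ are analogous. Hence we may assume $x_n\in(2^a,2^{a+1})\setminus(C_1\cup\cdots\cup C_5)$.

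Next, a direct computation gives $a(x_n+x_m)=a+1$ for all $n<m$, while $a(x_nx_m)=2a+w_1$, where $w_1\in\{0,1\}$ records membership in $A_0$ or $A_1$. Matching the $w_2=\varphi(a(\cdot))$ values of sums and products therefore forces $\varphi(a+1)=\varphi(2a+w_1)$, which Lemma~\ref{2colourslemma} forbids whenever $a\notin\{0,1\}$. The edge cases $(a,w_1)=(0,0)$ and $(1,1)$ reduce respectively to $\varphi(0)=\varphi(1)$ and $\varphi(2)=\varphi(3)$, each contradicted by the explicit clauses in the same lemma. This leaves only the two mirror cases $(a,w_1)\in\{(0,1),(1,0)\}$, which are interchanged by $x\mapsto 2x$, so it suffices to treat $(a,w_1)=(1,0)$: the sequence lies in $(2,2\sqrt{2})$ and the residuals $\delta_n=x_n-2$ are small and positive.

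The remaining task is the heart of the proof. The plan is to use the finer components $w_3,w_4,w_5$ together with $b(x_n)$ and $c(x_n)$ to extract, after further pigeonholing and passage to subsequences with strictly monotonic parameters and sufficient separation of binary supports, two strictly increasing integer sequences $(a_n)$ and $(b_n)$ associated to $x_n$ which behave under the operations exactly as required by Lemma~\ref{Marialemma1}: pairwise products contribute $(a_n+a_m,b_n+b_m)$ while pairwise sums contribute $(a_n,b_m)$ to the colouring pattern. Lemma~\ref{Marialemma1} --- or its shifted variants Lemma~\ref{Marialemma2} and Lemma~\ref{Marialemma3} to absorb $\pm 1$ corrections arising from carries --- then delivers the contradiction. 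The main obstacle is precisely this last step: $(a,w_1)\in\{(0,1),(1,0)\}$ are exactly the cases where the $w_2$-argument collapses, so one must rely on the intricate classes $H_i$, $J_i$, $B_i$; in particular, the square-root cutoff $2^{a+1}(1-2^{c-a})^{1/2}$ defining $B_i$ is tailored to detect when $(1+\delta_n/2)(1+\delta_m/2)$ crosses the threshold controlling the leftmost zero of $x_nx_m$, and confirming that this detection lines up with the mod-$3$ gaps used by $H_i$ and $J_i$ is the delicate technical crux.
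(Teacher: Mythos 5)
Your reduction of the main case to $a\in\{0,1\}$ via the $G_i$ component and Lemma~\ref{2colourslemma} is exactly the paper's first step, but beyond that the proposal has genuine gaps. First, the countable classes: the hypothesis is that the pairwise \emph{sums and products} lie in the colour class, not the terms themselves, so your $C_3$ argument (``each $x_n=2^a+2^{l_n}$'') starts from an assumption you are not entitled to make; and $C_5\setminus C_2$ is not ``analogous'' to $C_3$ --- in the paper it needs a completely different argument (irrationality of the relevant square roots plus a field-trace computation), while $C_3$ and $C_4$ themselves take a page of binary-support analysis in the Appendix. (For a bounded sequence, $C_1$ and $C_2$ are indeed easy, as you say.) Second, the claimed symmetry $x\mapsto 2x$ interchanging $(a,w_1)=(0,1)$ and $(1,0)$ fails: doubling shifts $a(x)$ by $1$ but preserves membership of $A_0$ versus $A_1$, so $(0,1)$ goes to $(1,1)$, not $(1,0)$; moreover doubling does not preserve the hypothesis (sums double while products quadruple) and $\nu$ is not doubling-equivariant because of the $\varphi(a(x))$ component. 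The paper treats the two surviving cases separately, and they really are different: for $x_n\in(2,2^{3/2})$ the $H_i$ classes (the gap $a-b$ mod $3$) suffice, whereas for $x_n\in(2^{1/2},2)$ one needs the $J_i$ classes \emph{and} the $B_i$ classes with the square-root cutoff, precisely because $a-c$ mod $3$ alone does not separate sum from product there.

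Most importantly, the heart of the proof is missing. You describe a ``plan'' to extract sequences $(a_n),(b_n)$ to feed into Lemma~\ref{Marialemma1} (or Lemmas~\ref{Marialemma2}, \ref{Marialemma3}) and you yourself flag the verification as ``the delicate technical crux''; that verification is the theorem. It is also doubtful that Lemma~\ref{Marialemma1} applies in this regime: with $a(x_n)$ constant the relevant parameters do not add under multiplication. In the paper's analysis of $x_n\in(2,2^{3/2})$, for instance, one splits according to whether $(b(x_n))$ is bounded or not; in the unbounded case the sum $x_n+x_m$ has $b$-value $b(x_n)$ while the product has $b$-value $b(x_n)+1$ or $b(x_n)+2$, and the contradiction is a direct mod-$3$ computation with the $H_i$ classes --- no appeal to the pair-colouring lemma is made (that lemma is used elsewhere, for Theorems~\ref{theoremN} and \ref{wasunlabelled}). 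The case $x_n\in(2^{1/2},2)$ similarly splits on $(c(x_n))$ bounded or unbounded, with the $B_i$ classes resolving the subcase that $J_i$ cannot. So what you have established is only the easy reduction; the substantive case analysis, and a correct treatment of the countable classes, still need to be supplied.
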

\begin{proof}
The colour class of the pairwise sums and products of $(x_n)_{n\geq1}$ cannot be any of $C_1$, $C_2$, $C_3\setminus C_4$, $C_4\setminus C_1$ and $C_5\setminus C_2$. Indeed, the proofs for $C_1$ and $C_2$ are an easy exercise. The proofs for $C_3$ and $C_5$, while routine, are lengthy, and so are presented in the Appendix. The proof for $C_4$ is very similar to the one for $C_3$, and so we omit it. Therefore $x_n+x_m$ and $x_nx_m$ are all in $\mathbb R^+\setminus(C_1\cup C_2\cup C_3\cup C_4\cup C_5)$ for all $n<m$.
\par Suppose for a contradiction that $(x_n)_{n\geq1}$ is
bounded and bounded away from zero. This immediately implies that the sequence of integers $(a(x_n))_{n\geq1}$ is bounded. By passing to a subsequence, we may assume that $(a(x_n))_{n\geq1}$ is constant, and thus equal to some fixed integer $k$. Moreover, by the pigeonhole principle and passing to another subsequence, 
we may assume that either $x_n<2^{a(x_n)+\frac{1}{2}}$ for all $n$ or $2^{a(x_n)+\frac{1}{2}}\leq x_n$ for all $n$. 
\par Let $n$ and $m$ be two distinct natural numbers. Since $a(x_n)=a(x_m)=k$ we have that $2^{k+1}<x_n+x_m<2^{k+2}$
and $2^{2k}<x_nx_m<2^{2k+2}$. This implies that $a(x_n+x_m)=k+1$ and that either $a(x_nx_m)=2k$, or $a(x_nx_m)=2k+1$. Let $i\in\{0,1\}$ be such that $x_n+x_m\in G_i$ and $x_nx_m\in G_i$. In other words we must have $\varphi(a(x_n+x_m))=\varphi(a(x_nx_m))$, which implies that $\varphi(k+1)=\varphi(2k)$ or $\varphi(k+1)=\varphi(2k+1)$, and thus $k\in\{0,1\}$.
\par We consider first the case when $k=1$. This means that $2<a_n<4$ and $a(x_nx_m)=a(x_n+x_m)=2$ for all distinct naturals $n$ and $m$. Hence we must have $2<x_n<2^{\frac{3}{2}}$ for all $n$.
\par We first assume that the integer sequence $(b(x_n))_{n\geq1}$ is bounded. By passing to a subsequence, we may assume that $(b(x_n))_{n\geq1}$ is constant and equal to a fixed integer $l<k=1$. Since $x_n\geq 2^{a(x_n)}+2^{b(x_n)}$ for all $n$, we cannot have $l=0$, or else $x_n\geq 2+1=3>2^{\frac{3}{2}}$, and so $l\leq-1$.
\par Let $m$ and $n$ be two distinct natural numbers. By the above we have that $x_n=2+2^l+u$ and $x_m=2+2^l+v$ for some $0\leq u,v<2^l$. Next we have that $x_n+x_m=4+2^{l+1}+u+v$ and $0\leq u+v<2^{l+1}$, thus $b(x_n+x_m)=l+1$, and consequently $a(x_n+x_m)-b(x_n+x_m)=2-(l+1)=1-l$.
\par On the other hand, $x_nx_m=4+2^{l+2}+(2^l+2)(u+v)+uv+2^{2l}$. The sum of terms involving the variables $u$ and $v$ can be bounded as follows: $(2^l+2)(u+v)+uv+2^{2l}<(2^l+2)2^{l+1}+2^{2l}+2^{2l}
=2^{2l+2}+2^{l+2}$. Therefore we trivially have $4+2^{l+2}<x_nx_m$ and $x_nx_m<4+2^{l+2}+2^{2l+2}+2^{l+2}
=4+2^{l+3}+2^{2l+2}<4+2^{l+4}$. This tells us that either $b(x_nx_m)=l+2$, or $b(x_nx_m)=l+3$, thus either
$a(x_nx_m)-b(x_nx_m)=-l$, or $a(x_nx_m)-b(x_nx_m)=-l-1$. In both cases $a(x_nx_m)-b(x_nx_m)$ and $a(x_n+x_m)-b(x_n+x_m)$ are not congruent $\mod3$, a contradiction.
\par Therefore we must have that $(b(x_n))_{n\geq1}$ is unbounded and, by passing to a subsequence, we may assume that $(b(x_n))_{n\geq1}$ is strictly decreasing.
\par Let $n$ be a natural number and $l=b(x_n)$. We know that there exists $u$ such that $0\leq u<2^l$ and $x_n=2+2^l+u$. We now pick an integer $s<l$ such that $u+2^s<2^l$, and then a natural number $m$ such that $b(x_m)<s$. Let $t=b(x_m)$ and $x_m=2+2^t+v$, where $0\leq v<2^t$. It follows that $x_n+x_m=4+2^l+u+2^t+v$. By all the above we have that $u+2^t+v<u+2^{t+1}\leq u+2^s<2^l$. Thus $b(x_n+x_m)=l$ and $a(x_n+x_m)-b(x_n+x_m)=2-l$.
\par Finally, since $2+2^l\leq x_n<2+2^{l+1}$ and $2+2^t\leq x_m<2+2^{t+1}$, we first have that 
$4+2^{l+1}<4+2^{l+1}+2^{t+1}+2^{l+t}\leq x_nx_m$. Moreover, $x_nx_m<4+2^{l+2}+2^{t+2}+2^{l+t+2}<4+2^{l+3}$. Putting these together we see that either $b(x_nx_m)=l+1$ or
$b(x_nx_m)=l+2$. Thus either $a(x_nx_m)-b(x_nx_m)=1-l$, or $a(x_nx_m)-b(x_nx_m)=-l$, neither of which is congruent to $a(x_n+x_m)-b(x_n+x_m)\mod3$, a contradiction. 
This concludes the case when $k=1$. 
\par We must therefore have $k=0$. In other words $a(x_n)=0$, $2^{\frac{1}{2}}\leq x_n<2$, and $a(x_n+x_m)=a(x_nx_m)=1$ for all distinct natural numbers $n$ and $m$. Since there is at most one $n$ such that $x_n=2^{\frac{1}{2}}$, by passing to a subsequence we may assume that $2^{\frac{1}{2}}<x_n<2$ for all $n$.
\par We observe that if $2^{\frac{1}{2}}<x_n<\frac{3}{2}$ and $2^{\frac{1}{2}}<x_n<\frac{3}{2}$ for two distinct $m$ and $n$, then $2\cdot2^{\frac{1}{2}}=2^{\frac{3}{2}}<x_n+x_m<3$, thus 
$x_n+x_m\in A_1$, while $2<x_nx_m<9/4<2^{\frac{3}{2}}$, so $x_nx_m\in A_0$, a contradiction. Therefore, by passing to a subsequence, we may assume that $\frac{3}{2}\leq x_n<2$. This immediately implies that $x_n\geq 2^1-2^{-1}=2^{a(x_n)+1}-2^{-2+1}$, and so $c(x_n)\leq -2$ for all $n$.
\par We first assume that the integer sequence $(c(x_n))_{n\geq1}$ is bounded. Thus by passing to a subsequence we may assume that it is constant and equal to a fixed integer $l\leq -2$. Let $m$ and $n$ be two distinct natural numbers. Then we have that $2-2^{l+1}\leq x_n<2-2^l$ and $2-2^{l+1}\leq x_m<2-2^l$. Summing the above we obtain $4-2^{l+2}\leq x_n+x_m<4-2^{l+1}$, and thus $c(x_n+x_m)=l+1$ and consequently $a(x_n+x_m)-c(x_n+x_m)=-l$. On the other hand, multiplying the above gives $4-2^{l+3}+2^{2l+2}\leq x_nx_m<4-2^{l+2}+2^{2l}$. The lower bound is trivially greater than $4-2^{l+3}$, and $2^{l+2}-2^{2l}>2^{l+1}$, so $4-2^{l+2}+2^{2l}<4-2^{l+1}$. This means that $c(x_nx_m)$ is either $l+1$ or $l+2$. Since $c(x_nx_m)=l+2$ implies $a(x_nx_m)-c(x_nx_m)=-l-1$ which is not congruent to $-l=a(x_n+x_m)-c(x_n+x_m)\mod3$, we conclude that $c(x_nx_m)=l+1$ for all $n\neq m$, which can be written as $4-2^{l+2}\leq x_nx_m<4-2^{l+1}$ for all $n\neq m$.
\par Observe that if $x_n<2(1-2^l)^{\frac{1}{2}}$ and $x_m<2(2-2^l)^{\frac{1}{2}}$ for two distinct positive integers $m$ and $n$, then $x_nx_m<4(1-2^l)=4-2^{l+2}$, which contradicts $c(x_nx_m)=l+1$. Therefore, by passing to a subsequence, we may assume that $x_n\geq 2(1-2^l)^{\frac{1}{2}}$ for all $n$.\par Let $n\neq m$ be two natural numbers. Then $x_n+x_m\geq 4(1-2^l)^{\frac{1}{2}}=
4(1-2^{c(x_n+x_m)-a(x_n+x_m)})^{\frac{1}{2}}$. Let $i\in\{0,1,2\}$ such that $-l=a(x_n+x_m)-c(x_n+x_m)\equiv i+1\mod3$. This means that
$x_n+x_m\in B_i$, and consequently $x_nx_m\in B_i$. On the other hand, since $x_n<2-2^l$ and $x_m<2-2^l$, it is easy to check that the product $x_nx_m<4-2^{l+2}+2^{2l}=4(1-2^l+2^{2l-2})<4(1-2^l)^{\frac{1}{2}}$. Since $a(x_nx_m)-c(x_nx_m)=1-(l+1)=-l$ we have that $x_nx_m<4(1-2^{c(x_nx_m)-a(x_nx_m)})^{\frac{1}{2}}$, and thus $x_nx_m\in B_j$ where $j\in\{0,1,2\}$ and $j\equiv-l\mod3\equiv i+1\mod 3$. But this is a contradiction since it implies that $i\neq j$, so that the sum and the product are in different $B$-classes.
\par Therefore we must have that the sequence $(c(x_n))_{n\geq1}$ is unbounded and, by passing to a subsequence, we may assume that it is strictly decreasing.
\par Let us first assume that there exist $n<m$ such that $x_n=2-2^{c(x_n)+1}$ and $x_m=2-2^{c(x_m)+1}$. Then we have that $x_n+x_m=4-2^{c(x_n)+1}-2^{c(x_m)+1}$, and since $c(m)<c(n)$ we get that $4-2^{c(x_n)+2}<x_n+x_m<4-2^{c(x_n)+1}$, so $c(x_n+x_m)=c(x_n)+1$ and consequently $a(x_n+x_m)-c(x_n+x_m)=-c(x_n)$. On the other hand, $x_nx_m=4-2^{c(x_n)+2}-2^{c(x_m)+2}+2^{c(x_n)+c(x_m)+2}
=4-2^{c(x_n)+2}-2^{c(x_m)+2}(1-2^{c(x_n)})$. Hence we have that $4-2^{c(x_n)+3}<4-2^{c(x_n)+2}-2^{c(x_m)+2}<x_nx_m<4-2^{c(x_n)+2}$. It follows that $c(x_nx_m)=c(x_n)+2$, so$a(x_nx_m)-c(x_nx_m)=-c(x_n)-1$, a contradiction.
\par Finally, after passing to a subsequence, we may assume that for every $n$ there exists $u_n$ such that $0<u_n<2^{c(x_n)}$ and $x_n=2-2^{c(x_n)+1}+u_n$. Let $n$ be a natural number and let $s\in\mathbb Z$ be such that $u_n+2^s<2^{c(x_n)}$. Since the sequence $(c(x_n))_{n\geq1}$ is strictly decreasing and unbounded, we can find $m>n$ such that $c(x_m)<\min\{s,\log_2 u_n-1\}$. It then follows that $x_n+x_m=4-2^{c(x_n)+1}+u_n-2^{c(x_m)+1}+u_m$. We observe that $0<u_n-2^{c(x_m)+1}+u_m<u_n-2^{c(x_m)+1}+2^{c(x_m)}<u_n+2^{c(x_m)}
<u_n+2^s<2^{c(x_n)}$.  This means that $4-2^{c(x_n)+1}<x_n+x_m<4-2^{c(x_n)+1}+2^{c(x_n)}=4-2^{c(x_n)}$, and so  $c(x_n+x_m)=c(x_n)$ and consequently $a(x_n+x_m)-c(x_n+x_m)=1-c(x_n)$.
\par We are now going to analyse the product $x_nx_m$. We have that $2-2^{c(x_n)+1}<x_n<2-2^{c(x_n)}$ and
$2-2^{c(x_m)+1}<x_m<2-2^{c(x_m)}$. By multiplying the above inequalities we obtain that $4-2^{c(x_n)+2}-2^{c(x_m)+2}+2^{c(x_n)+c(x_m)+2}<x_nx_m$ and $x_nx_m<
4-2^{c(x_n)+1}-2^{c(x_m)+1}+2^{c(x_n)+c(x_m)}$. We consider these two inequalities separately.
\par First we have that $4-2^{c(x_n)+1}-2^{c(x_m)+1}+2^{c(x_n)+c(x_m)}=
4-2^{c(x_n)+1}-2^{c(x_m)}(2-2^{c(x_n)})<4-2^{c(x_n)+1}$, thus $x_nx_m<4-2^{c(x_n)+1}$.
\par Secondly we have that $4-2^{c(x_n)+2}-2^{c(x_m)+2}+2^{c(x_n)+c(x_m)+2}
>4-2^{c(x_n)+2}-2^{c(x_m)+2}>4-2^{c(x_n)+3}$, since $c(x_m)<c(x_n)$.
\par Putting everything together we get that $4-2^{c(x_n)+3}<x_nx_m<4-2^{c(x_n)+1}$, tand hus either $c(x_nx_m)=c(x_n)+1$ or $c(x_nx_m)=c(x_n)+2$. This means that either $a(x_nx_m)-c(x_nx_m)=-c(x_n)$,
or  $a(x_nx_m)-c(x_nx_m)=-c(x_n)-1$, neither of which is congruent to $a(x_n+x_m)-c(x_n+x_m)=1-c(x_n)\mod3$, a contradiction.
\end{proof}

It is important to point out that the colouring $\nu$ cannot be used to 
rule out similar statements about sums and products from a sequence $(x_n)_{n \geq 1}$ 
that tends to zero. Indeed, since each colour class of $\nu$ is measurable (being
either countable or open), the result of \cite{BHL} tells us that there is a
sequence with all of its products and all of its sums (even infinite sums) having
the same colour for $\nu$. 

\section{Combining an extension of $\theta$ over the rationals with $\nu$}
In this section we will build a colouring of the positive rationals via an `extension' of the colouring $\theta$ that also incorporates $\nu$. This colouring will force any bounded sequence with monochromatic pairwise sums and products to have the set of primes which divide the denominators of the terms of the sequence 
to be infinite. 

Roughly speaking, we will be concerned with how a number ends, not just how it
starts, and therefore we will be considering numbers written not in binary (of
course) but rather in the smallest base for which they terminate. The analysis is
considerably more complicated than it would be for binary. There is also the
issue that different numbers will have different `smallest bases', but it turns
out that this will not cause too much of a problem.
\par Let $(p_n)_{n\geq1}$ be the enumeration
of all primes in increasing order, and $P_n=\displaystyle{\prod_{k=1}^n p_k}$ for all $n\in\mathbb{N}$. Let also $T_n=\mathbb Q_{(n)}\cap(0,1)$. In other words, $T_n$ consists of all the rationals between 0 and 1 for which, in reduced form, the denominator does not have any $p_t$ with $t>n$ as a factor. For completeness, define $T_0=\emptyset$. If $x \in T_n \setminus T_{n-1}$ we may say that $P_n$ is the
`minimal base' of $x$.
\par For $n\in\mathbb N$ and $x\in T_n$, we define $s_n(x)$ to be the position
of the leftmost significant digit and $e_n(x)$ the position of the rightmost significant
digit in the base $P_n$ expansion of $x$. For example, if $x$ has the base 6 
expansion $405.00213$ then $s_3(x)=2$ and $e_3(x)=-5$. For $x\in\mathbb N$, so
that $e_2(x)$ and $s_2(x)$ are the position of the rightmost significant digit and leftmost significant digit respectively in the binary expansion of $x$, we set $d(x)$ to be the digit in position $e_2(x)+1$. Finally, for $x,y\in\mathbb N$, define $g(x,y)=0$ if $e_2(y)>s_2(x)$ and
$g(x,y)=1$ if $e_2(y)\leq s_2(x)$.
\par The colouring $\Phi$ of $\mathbb N^{(2)}$ defined previously can be rewritten as follows: $\Phi(x,y)=(e_2(x)\mod2,e_2(y)\mod2,d(x),d(y),g(x,y))$. We also define the colouring $\Psi'$ of $\mathbb N^{(2)}$, which is very similar in spirit to the previously defined colouring $\Psi$, by $\Psi'(x,y)=\Phi(1,2)$ if 
$x=1$, and $\Psi'(x,y)=\Phi(x-1,y)$ if $x>1$.
\par We are now ready to define a colouring $\mu$ of $\mathbb Q$ as follows. If $x\geq1$, let $\mu(x)=\nu(x)$. Otherwise, for any $x\in\mathbb{Q}\cap(0,1)$, there exists a unique $n\in\mathbb N$ such that $x\in T_n\setminus T_{n-1}$. Consequently, define $$\mu(x)=(\nu(x), \Phi(-s_n(x),-e_n(x)),\Psi'(-s_n(x),-e_n(x))).$$
The following is what we wish to prove.

\begin{theorem}\label{wasunlabelled}
Let $(x_n)_{n\geq1}$ be a bounded sequence of positive rationals such that the set $\{x_n+x_m, x_nx_m:n\neq m\}$ is monochromatic with respect to $\mu$. Then for any $k\in\mathbb N$ there exist $l$ and $n$ such that $x_n\in T_l\setminus T_k$.
\end{theorem}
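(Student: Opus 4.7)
The plan is to proceed by contradiction: assume there is some $k \in \mathbb N$ such that every $x_n$ lies in $[1, \infty) \cup T_k$, and extract from $(x_n)$ a configuration that contradicts Lemma~\ref{Marialemma1} (or one of its variants). Since $(x_n)$ is bounded, Theorem~\ref{0infinity} rules out the possibility that it is also bounded away from zero, so passing to a subsequence I may assume $x_n \to 0$ and eventually $x_n \in T_k$; a Pigeonhole step on the unique index $j$ with $x_n \in T_j \setminus T_{j-1}$ then refines the sequence so that every $x_n$ lies in one fixed $T_{k_0} \setminus T_{k_0 - 1}$. Every $x_n$ now has the common minimal base $P_{k_0}$, so the $\Phi$- and $\Psi'$-components of $\mu$ are all evaluated at pairs $(-s_{k_0}(x_n), -e_{k_0}(x_n))$ in this common base.

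Set $a_n = -s_{k_0}(x_n)$, $b_n = -e_{k_0}(x_n)$ and write $x_n = A_n/P_{k_0}^{b_n}$ with $P_{k_0} \nmid A_n$. I would pigeonhole further to fix the leading digit $L$, the last digit $d = A_n \bmod P_{k_0}$, and --- crucially --- the $p_{k_0}$-adic valuation of $A_n$ (achieved by pigeonholing $A_n$ modulo a large enough power of $p_{k_0}$); this last stabilisation is what prevents $p_{k_0}$-cancellations in $x_n + x_m$ from lowering the minimal base below $P_{k_0}$. Since $x_n \to 0$, we have $a_n, b_n \to \infty$, so I may pass to strictly increasing $(a_n)$ and $(b_n)$. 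A direct $p_{k_0}$-adic valuation calculation then confirms that both $x_n x_m$ and $x_n + x_m$ remain in $T_{k_0} \setminus T_{k_0 - 1}$, so $\mu$ evaluates $\Phi$ and $\Psi'$ in base $P_{k_0}$ on them too. Explicitly, for $n < m$ the trailing digit of $x_n x_m$ sits at position $-(b_n + b_m)$ and of $x_n + x_m$ at $-b_m$, while the leading digits are at $-(a_n + a_m) + \varepsilon_1$ and $-a_n + \varepsilon_2$ for some $\varepsilon_1, \varepsilon_2 \in \{0, 1\}$; a Ramsey argument on pairs makes $\varepsilon_1$ and $\varepsilon_2$ constant.

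Monochromaticity of the $\Phi$-component of $\mu$ on $\{x_n + x_m, x_n x_m : n < m\}$ then reads as $\Phi$-monochromaticity of
\[
\{(a_n + a_m - \varepsilon_1,\, b_n + b_m),\, (a_n - \varepsilon_2,\, b_m) : n < m\}.
\]
In the generic case $a_n < b_n$, this directly contradicts Lemma~\ref{Marialemma1} when $\varepsilon_1 = \varepsilon_2 = 0$; when a shift is present I would switch to the $\Psi'$-component, whose defining identity $\Psi'(x, y) = \Phi(x - 1, y)$ translates the first coordinate by $-1$ and thereby absorbs a unit of the shifts, reducing to Lemma~\ref{Marialemma1}, \ref{Marialemma2}, or \ref{Marialemma3} applied to a shifted pair $(\alpha_n, \beta_n)$ still satisfying $\alpha_n < \beta_n$.

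The main obstacle is the degenerate single-digit case $a_n = b_n$, i.e.\ $x_n = L \cdot P_{k_0}^{-b_n}$, where $(a_n, b_n)$ is not a valid element of $\mathbb N^{(2)}$. I would handle this again via the $\Psi'$-component: if $L^2 \geq P_{k_0}$, then the sum yields the $\Psi'$-image $(b_n - 1, b_m)$ and the product $(b_n + b_m - 2, b_n + b_m)$, i.e.\ $(\alpha_n, \beta_m)$ and $(\alpha_n + \alpha_m, \beta_n + \beta_m)$ for $(\alpha_n, \beta_n) = (b_n - 1, b_n)$ (satisfying $\alpha_n < \beta_n$), contradicting Lemma~\ref{Marialemma1}; if $L^2 < P_{k_0}$ then the product is literally one digit in base $P_{k_0}$, so its $\Phi$-value is the extended diagonal value, which cannot coincide with $\Phi(b_n, b_m)$ --- the $\Phi$-value of the sum --- for any $n < m$, an immediate contradiction. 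The secondary obstacle, ensuring $p_{k_0}$-cancellation does not drop the minimal base of $x_n + x_m$, is handled by the valuation pigeonhole mentioned in paragraph two.
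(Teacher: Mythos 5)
Your overall strategy is the paper's: use Theorem~\ref{0infinity} to force $x_n\to0$, fix a common minimal base $P_{k_0}$ by pigeonhole, stabilise digits, compute the start and end positions of sums and products in that base, and feed the resulting pairs into Lemma~\ref{Marialemma1} and its variants via the $\Phi$- and $\Psi'$-components of $\mu$. But two of your steps do not work as written. The main one is the carry in the leading digit of a sum: after your Ramsey step you could be left with $(\varepsilon_1,\varepsilon_2)=(0,1)$, where the sums give pairs $(a_n-1,b_m)$ and the products give $(a_n+a_m,b_n+b_m)$. Switching to the $\Psi'$-component shifts the first coordinate of \emph{both} families by one, so the relative offset $2\varepsilon_2-\varepsilon_1=2$ is unchanged and none of Lemmas~\ref{Marialemma1}--\ref{Marialemma3} applies; the ``absorb a unit of the shifts'' reduction fails here. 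You must either rule out $\varepsilon_2=1$ directly (for fixed $n$ and $m>n$, a carry out of the leading digit of $x_n+x_m$ forces $x_nP_{k_0}^{a_n}\geq P_{k_0}-P_{k_0}^{-(a_m-a_n-1)}$, which fails for large $m$ since $x_nP_{k_0}^{a_n}<P_{k_0}$ is fixed and $a_m\to\infty$), or do what the paper does: pass to a subsequence whose terms have pairwise left-to-right disjoint supports in base $P_{k_0}$, so that $s(x_n+x_m)=s(x_n)$ exactly and $\varepsilon_2=0$ by construction, leaving only the product dichotomy $\varepsilon_1\in\{0,1\}$, which the paper fixes by a per-term pigeonhole ($x_n\gtrless\sqrt{P_{k_0}}\,P_{k_0}^{s(x_n)}$) rather than Ramsey.

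Second, your mechanism for keeping $x_n+x_m$ in $T_{k_0}\setminus T_{k_0-1}$ --- ``pigeonhole $A_n$ modulo a large enough power of $p_{k_0}$'' --- does not fix $v_{p_{k_0}}(A_n)$: the residue class $0\bmod p_{k_0}^M$ is compatible with arbitrarily large valuations, and no fixed $M$ is ``large enough'' because $v_{p_{k_0}}(A_n)$ is bounded only by $b_n\to\infty$. Your instinct that some stabilisation is needed is sound (indeed the paper's bare assertion that disjoint supports preserve the minimal base is not literally true when $p_{k_0}$ divides the common last digit: in base $6$, $1/12=0.03$ and $1/96=0.00213$ have disjoint supports and last digit $3$, yet their sum is $3/32$, which is dyadic), but the correct patch is a dichotomy on valuations: either pass to a subsequence on which the $v_{p_{k_0}}(x_n)$ are pairwise distinct, so $v_{p_{k_0}}(x_n+x_m)=\min<0$; or they are constant, say $-c$, and then $c$ is a fixed number, so you may pigeonhole $x_np_{k_0}^{c}$ modulo $p_{k_0}^{c}$ and check the sum's valuation stays negative. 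Finally, in your single-digit case with $L^2<P_{k_0}$, the claimed ``immediate contradiction'' is not immediate: nothing in general prevents the diagonal value $\Phi(b_n+b_m,b_n+b_m)$ from coinciding with $\Phi(b_n,b_m)$ for the particular integers at hand. That subcase does, however, follow from your own toolkit: the $\Psi'$-component converts it into the monochromatic family $\{(b_n-1,b_m)\}\cup\{((b_n-1)+(b_m-1)+1,\,b_n+b_m)\}$, which is exactly the configuration forbidden by Lemma~\ref{Marialemma2}.
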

\begin{proof}
Because the sequence $(x_n)_{n\geq1}$ is monochromatic with respect to $\mu$ it is also monochromatic with respect to $\nu$. Since $(x_n)_{n\geq1}$ is bounded, Theorem~\ref{0infinity} tells us that $(x_n)_{n\geq1}$ must converge to $0$, and so we may assume that all terms are less than 1.\par Assume for a contradiction that there exists $k\in\mathbb N$ such that $x_n\in T_k$ for all $n\in\mathbb N$. By passing to a subsequence, we can assume that for all $n$ $x_n\in T_{t}\setminus T_{t-1}$ for some $t\leq k$. In other words, the minimal base of the form $P_s$ for $x_n$ is $P_t$, for all $n\geq1$. Since $(x_n)_{n\geq1}$ converges to 0, $s_t(x_n)$ and $e_t(x_n)$ must 
tend to $-\infty$. In particular, we may assume from now on that $s_t(x_n)<-1$ for all $n$. Moreover, by passing to a subsequence, we may assume that the sequence is strictly decreasing and that all of its terms have pairwise left to right disjoint support -- in other words, if $n<m$ then $e_t(x_n)>s_t(x_m)$. Also, by the pigeonhole principle, there exists a subsequence for which all terms have the same last digit, say 
$0<d<P_t$, and by passing to that subsequence we may assume that this is the case for  $(x_n)_{n\geq1}$ itself.\par Let $n<m$ be positive integers. Then, because $x_n$ and $x_m$ have disjoint supports in base $P_t$, which is their minimal base,  $x_n+x_m$ also has minimal base $P_t$. Furthermore, $s_t(x_n+x_m)=s_t(x_n)$ and $e_t(x_n+x_m)=e_t(x_m)$. It is also easy to see that if both $x_n$ and $x_m$ have minimal base $P_t$ then so does $x_nx_m$.
\par We note that if $x\in T_t\setminus T_{t-1}$, then $-e_t(x)$ is the smallest positive integer $u$ such that $x(P_t)^u\in\mathbb N$. Clearly $x_nx_m (P_t)^{-e_t(x_n)-e_t(x_m)}\in\mathbb N$, and thus $e_t(x_nx_m)\geq e_t(x_n)+e_t(x_m)$.
\par Now suppose that there exists $k'\in\mathbb N$ smaller than $-e_t(x_n)-e_t(x_m)$ such that $x_nx_m (P_t)^{k'}\in\mathbb N$. It follows that $x_n(P_t)^{-e_t(x_n)}x_m(P_t)^{-e_t(x_m)}(P_t)^{k'+e_t(x_n)+e_t(x_m)}\in\mathbb N$. But $x_n(P_t)^{-e_t(x_n)}\equiv x_m(P_t)^{-e_t(x_m)}\equiv d\mod P_t$. Because the power of $P_t$ is negative, we must have that $P_t$ divides $d^2$, and since $P_t$ is a product of distinct primes, we must in fact have that $P_t$ divides $d$, a contradiction. Therefore $e_t(x_nx_m)=e_t(x_n)+e_t(x_m)$.
\par Finally, for $x\in T_t\setminus T_{t-1}$, $s_t(x)$ is the unique integer $l$ such that $(P_t)^{l+1}>x\geq(P_t)^l$. By the pigeonhole principle we either have $x_n\geq\sqrt{P_t}(P_t)^{s_t(x_n)}$ for infinitely many $n$ or $x_n<\sqrt{P_t}(P_t)^{s_t(x_n)}$ for infinitely many $n$. By passing to a subsequence we may assume that we are either in the first case for all $n$ or in the second case for all $n$. In the first case $s_t(x_nx_m)=s_t(x_n)+s_t(x_m)+1$ for all $m\neq n$, while in the second case $s_t(x_nx_m)=s_t(x_n)+s_t(x_m)$.\par Let $a_n=-s_t(x_n)>1$ and $b_n=-e_t(x_n)>a_n$ for all $n\in\mathbb N$. Note that both $(a_n)_{n\geq1}$ and $(b_n)_{n\geq1}$ are strictly increasing sequences of natural numbers. Then $\mu$ tells us that either $\Phi(a_n, b_m)=\Phi(a_n+a_m, b_n+b_m)$ for all $n<m$, or $\Phi(a_n-1, b_m)=\Phi(a_n+a_m-2, b_n+b_m)$ for all $n<m$, which contradicts Lemma~\ref{Marialemma1} or Lemma~\ref{Marialemma3}.
\end{proof}
\section{Exploring $\mu$ further}
It turns out that for $\mu$ we can find an injective sequence with all pairwise 
sums and products monochromatic, and actually even all finite sums and
products monochromatic. This shows that neither $\theta$ nor $\nu$ nor their product  can provide a counterexample for the 
`finite sums and products' problem in the set of all rationals. We include this result just out of interest; the reader can skip this section if desired.\par We say that the sequence $(y_n)_{n\geq1}$ is a \textit{product subsystem} of the sequence $(x_n)_{n\geq1}$ if there exists a sequence $(H_n)_{n\geq1}$ of finite sets of natural numbers such that for every $n\geq1$, $\max H_n <\min H_{n+1}$ and $y_n=\displaystyle\prod_{t\in H_n}x_t$.
\begin{theorem}\label{theoremextension}There exists a sequence $(y_n)_{n\geq1}$ in $\mathbb Q\cap (0,1)$ such that all of its finite sums and finite products are monochromatic with respect to $\mu$.
\end{theorem}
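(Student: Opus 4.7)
The plan is to construct the sequence via an ultrafilter argument in the spirit of \cite{BHL}, aiming at an infinite sequence in $\mathbb Q\cap(0,1)$ whose terms decrease fast enough that $\sum_{n\geq1}y_n<1$. This summability is actually forced on us: if any finite sum reached $[1,\infty)$, then its $\mu$-colour would be a bare $\nu$-value, whereas $\mu(y_n)$ for $y_n\in(0,1)$ is a triple that also records $\Phi$- and $\Psi'$-components, so the two colours could never match. Consequently, all finite sums and products must live inside $(0,1)\cap\mathbb Q$ and must share one common $(\nu,\Phi,\Psi')$-triple.

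The first step is to produce a common additive-multiplicative idempotent $p\in\beta\mathbb Q^+$ that concentrates on arbitrarily small neighbourhoods of $0$. Since $\nu$ has all colour classes either countable or open in $\mathbb R^+$, the theorem of \cite{BHL} produces a common idempotent $p_0\in\beta\mathbb R^+$ sitting in a single $\nu$-colour class $B$; the countable $C_i$-classes cannot support an infinite IP-set of sums and products, so $B$ must be open. By intersecting $B$ with the dense sets $\mathbb Q^+\cap(0,\epsilon)$ and rerunning the BHL argument inside these shrinking rational neighbourhoods, one obtains the desired rational, small-concentrated common idempotent $p$.

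Because $\mu$ takes only finitely many values, $p$ lies automatically in a unique $\mu$-colour class $A\subseteq B\cap(0,1)\cap\mathbb Q$, so the $\Phi$- and $\Psi'$-components come along for free. The standard Galvin--Glazer extraction then builds $(y_n)$ inductively: having chosen $y_1,\dots,y_{n-1}$, one picks $y_n$ from
\[
A\cap(0,2^{-n})\cap\bigcap_{\emptyset\ne F\subseteq\{1,\dots,n-1\}}\Bigl(\bigl(A-\textstyle\sum_{i\in F}y_i\bigr)\cap\bigl(A/\textstyle\prod_{i\in F}y_i\bigr)\Bigr),
\]
which is in $p$ by the idempotency of $p$ for both $+$ and $\cdot$. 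The factor $(0,2^{-n})$ forces $\sum y_n<1$, and the remaining intersections guarantee that each new sum and each new product lies in $A$, so inductively every finite sum and every finite product of $(y_n)$ ends up in $A$.

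The main obstacle is the existence step: realising the BHL idempotent as a rational, small-concentrated ultrafilter. The original BHL argument leans on Lebesgue measurability on $\mathbb R^+$, and $\mathbb Q^+$ has measure zero, so direct transplanting is impossible. A cleaner route is to work inside the Stone--\v Cech compactification of the multiplicative semigroup $(0,1)\cap\mathbb Q^+$ (which is closed under $\cdot$), construct a multiplicative idempotent there, and then combine it with an additive idempotent on a suitable semigroup of "small" rationals, using the open density of the $\nu$-class $B$ in $\mathbb Q^+$ to keep the combined idempotent inside $B$ and hence inside a single $\mu$-colour class.
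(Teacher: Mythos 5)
There is a genuine gap, and it sits exactly where you locate ``the main obstacle'': the existence of a single ultrafilter $p$ with $p+p=p=p\cdot p$. The theorem of \cite{BHL} does \emph{not} produce such a ``common additive-multiplicative idempotent''; its positive result for measurable colourings is proved by a measure-theoretic (Lebesgue density) argument and yields only a monochromatic sequence of reals, not an ultrafilter that is idempotent for both operations. Indeed, a combined idempotent in $\beta\mathbb Q^+$ would, via exactly the Galvin--Glazer extraction you describe, give a monochromatic set of finite sums and products for \emph{every} finite colouring of $\mathbb Q^+$ -- which is precisely the central open problem this paper discusses (and the analogous statement in $\beta\mathbb N$ is known to be false: no $p\in\beta\mathbb N$ satisfies $p+p=p=p\cdot p$, by Hindman's pairwise sums-and-products colouring). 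So your existence step assumes something at least as strong as the open question, and your proposed repair -- ``combine'' a multiplicative idempotent on $(0,1)\cap\mathbb Q$ with an additive idempotent on small rationals -- is exactly the missing construction, for which no mechanism is given; openness and density of the $\nu$-class $B$ do not by themselves merge two idempotents for different operations into one ultrafilter containing $B$. A secondary omission: even granting a $\nu$-monochromatic structure, you never address the $\Phi$- and $\Psi'$-components of $\mu$, which depend on the base-$P_n$ expansions of the sums and products; these do not come ``for free'' from smallness alone, but only from the specific arithmetic shape of the terms.

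For contrast, the paper's proof is elementary and avoids any combined idempotent. It takes a product subsystem of $\bigl(1/p_{r_n}\bigr)_{n\geq1}$ (Finite Sums Theorem, multiplicative form) to make all finite \emph{products} lie in one $\nu$-class $U$; it then observes $U$ must be one of the open classes (the countable classes consist of dyadic rationals or irrationals, which these products cannot be), and extracts a rapidly decreasing subsequence so that adding a new term perturbs each existing finite sum by less than the distance to the boundary of $U$, putting all finite \emph{sums} in $U$ as well. Finally, the $\Phi$ and $\Psi'$ components are handled by the arithmetic fact that every finite sum or product $z$ of the chosen terms satisfies $z=z'/P_k$ with $1\leq z'<P_k$ (using $\sum_i 1/p_{r_i}<1$), so $s_k(z)=e_k(z)=-1$ and those components are constant. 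Your ultrafilter strategy, if its existence step could be supplied, would be a genuinely different and stronger route (it would give much more than this theorem), but as written it rests on an unavailable tool.
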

\begin{proof} Starting with $r_1=2$, we may inductively choose an increasing sequence 
$ (r_n)_{n\geq1}$ of natural numbers such that for all $n\in\mathbb N$ we have  $\displaystyle\sum_{i=1}^n{1\over p_{r_i}}<1$. By the Finite Sums Theorem
(or rather a simple corollary of it -- see Corollary 5.15 in \cite{HS}) we can choose a product subsystem $(x_n)_{n\geq 1}$ of $\left(\dfrac{1}{p_{r_i}}\right)_{n\geq1}$
such that all finite products of $(x_n)_{n\geq1}$ are monochromatic with respect to $\nu$ -- in other words, they are all members of a colour class of $\nu$, say $U$. 
\par The colouring $\nu$ of 
$\mathbb R^+$ consists of five countable classes and several classes that are open in $\mathbb R^+$. Recall that the countable colour classes are $C_1=\{2^k:k\in\mathbb Z\}$, $C_2=\{2^{k+\frac{1}{2}}:k\in\mathbb Z\}$, $C_3\setminus C_4=\{2^k+2^l:k,l\in\mathbb Z$ and $l<k\}\setminus C_4$,
$C_4\setminus C_1=\{2^k-2^l:k,l\in\mathbb Z$ and $l<k\}\setminus C_1$, and $C_5\setminus C_2=\{2^{k+1}(1-2^{l-k})^{\frac{1}{2}}:k,l\in\mathbb Z$ and $l<k\}\setminus C_2$. It is easy to see that $C_2$ contains only irrational numbers. Observe also that $C_5$ contains only irrational numbers, because $\left(1-\frac{1}{2^n}\right)^{\frac{1}{2}}$ is irrational for any $n\in\mathbb N$. (Indeed, suppose $\left(1-\frac{1}{2^n}\right)^{\frac{1}{2}}=\frac{p}{q}$ for some coprime $p,q\in\mathbb N$; we then get that $\frac{2^n-1}{2^n}=\frac{p^2}{q^2}$, so $2^n$ and $2^n-1$ have to be perfect squares, but no two perfect squares in $\mathbb N$ differ by 1.)
\par The classes $C_1$, $C_3\setminus C_4$, and $C_4\setminus C_1$ consist of rational number that have denominator (in reduced form) a power of 2, and thus none of them can be  in $U$ as no $x_n$ has this property since $r_1=2$. Furthermore, $C_2$ and $C_5\setminus C_2$ consist of irrational numbers, so $C_2\neq U$ and $C_5\setminus C_2\neq U$. We conclude that $U$ is an open colour class of $\nu$ that contains all the finite products of $(x_n)_{n\geq1}$.
\par We are now going to find a subsequence $(y_n)_{n\geq1}$ of $(x_n)_{n\geq1}$ such that all its finite sums are in $U$ as well. We proceed by induction. Let $y_1=x_1$. Now assume $n\geq1$ and that we have chosen
$y_1>y_2>\cdots>y_n$ such that $y_i\in\{x_j:j\in\mathbb N\}$ for all $1\leq i\leq n$, and that for any finite non-empty set $A$ of $\{1,2,\cdots,n\}$ we have $\displaystyle\sum_{i\in A} y_i\in U$.\par Because $U$ is open in $\mathbb R^{+}$, we can pick $\epsilon_A>0$ such that
$\left(\displaystyle\sum_{i\in A} y_i,\displaystyle\sum_{i\in A} y_i+\epsilon_A\right)\subset U$ for any finite non-empty set $A$ of $\{1,2,\cdots,n\}$. Let
$\epsilon=\min\{\epsilon_A, y_i:\emptyset\neq F\subseteq\{1,2,\cdots,n\}, 1\leq i\leq n\}$. Pick $m$ such that for all $j\geq m$ we have $x_j<\epsilon$, and set $y_{n+1}=x_m$. This finishes the induction step. Therefore, all the finite sums and all the finite products of the sequence $(y_n)_{n\geq1}$ are in $U$, and so are monochromatic for the 
colouring $\nu$.
\par To complete the proof we show that if $z$ is either a finite sum or a finite product of $(y_n)_{n\geq1}$ and $z\in T_k\setminus T_{k-1}$, then $e_k(z)=-1$, and consequently $s_k(z)=-1$.
\par First assume that $z$ is a finite product of elements of $(y_n)_{n\geq1}$. This implies that $z$ is a finite product of elements of $\left(\dfrac{1}{p_n}\right)_{n\geq1}$. Therefore there exists a finite set $A$ of natural numbers such that $z=\displaystyle\prod_{i\in A}\dfrac{1}{p_i}$, and thus $z\in T_k\setminus T_{k-1}$, where $k=\max A$. We observe that $zP_k=\displaystyle\prod_{i\in\{1,2,\ldots,k\}\setminus A}p_i<P_k$, so $z=\dfrac{z'}{P_k}$ for some $1\leq z'<P_k$, which implies that $e_k(z)=s_k(z)=-1$.
\par Finally, let $z=\displaystyle\sum_{i\in A}y_i$ for some finite set $A=\{j_1,j_2,\cdots,j_s\}$ of natural numbers of size $s>1$, where $j_1<j_2<\cdots<j_s$. Since $(y_n)_{n\geq1}$ is a subsequence of $(x_n)_{n\geq1}$, which is a 
product subsystem of $\left(\dfrac{1} {p_{r_n}}\right)_{n\geq1}$, for each $i\in\{1,2,\ldots,s\}$ there exists a finite set $F_i$ of natural numbers such that $\max F_i<\min F_{i+1}$ if
$i<s$, and $y_{j_i}=\displaystyle\prod_{t\in F_i}\dfrac{1}{p_{r_t}}$. Denote by $m_i$ the maximum of $F_i$ for all $i\in\{1,2,\ldots,s\}$, and let $k=r_{m_s}$, so that $z\in T_k\setminus T_{k-1}$.
\par We first note that $\displaystyle\sum_{i=1}^s\dfrac{1}{p_{r_{m_i}}}<1$, and thus $\displaystyle\sum_{i=1}^s{\dfrac{p_k}{p_{r_{m_i}}}}<p_k$. We now see that $zP_k=\left(\displaystyle\sum_{i=1}^s y_{j_i}\right)\displaystyle\prod_{m=1}^kp_m
=\left(\displaystyle\sum_{i=1}^s\prod_{t\in F_i}\dfrac{1}{p_{r_t}}\right)\displaystyle\prod_{m=1}^kp_m\leq\left(\displaystyle\sum_{i=1}^s\dfrac{1} {p_{r_{m_i}}}\right)\displaystyle\prod_{m=1}^kp_m=\left(\sum_{i=1}^s\dfrac{p_k}{p_{r_{m_i}}}\right)\displaystyle\prod_{m=1}^{k-1}p_m<P_k$, by the above observation. Therefore, as before, $z=\dfrac{z''}{P_k}$ for some $1\leq z''<P_k$, which implies $e_k(z)=s_k(z)=-1$.
\end{proof}
\section{Unbounded sequences in the rationals}
In this section we give a finite colouring of the rationals such
that no unbounded sequence whose denominators contain only
finitely many primes can have the set of all its finite sums
and products monochromatic.

The general aim is to write numbers as an integer part (which will be considered
in binary) and a fractional part (which will be considered in the `minimal base'
as in Section 5), although actually we will also make use of the integer part
written in that minimal base of the fractional part. By using the finite sums,
we hope to show that the `centres clear out', meaning that the fractional parts tend
to 0 (or 1) and the integer parts have ends that tend to infinity. This will then give
us the disjointness of support that we need to apply results conceptually similar to  
Lemma~\ref{Marialemma1}. For example, if the fractional parts tend to
0 and the integer parts have ends that tend to infinity then we will consider 
the relationship between quantities like the left gap of the integer part and 
the end of the fractional part -- the key point being that we will be able to
control how the integer parts behave under sum and product, because the 
fractional parts will be `too small to interfere'. 

\begin{theorem}\label{finitesums}
There exists a finite colouring $\alpha$ of the positive rationals such that there exists no unbounded sequence $(x_n)_{n\geq1}$ that has the set of all its finite sums and products monochromatic with respect to $\alpha$, with the set of primes that divide the denominators of its terms being finite.
\end{theorem}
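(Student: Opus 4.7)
The plan is to define $\alpha$ as a refinement of $\mu$ together with extra colour components encoding the binary behaviour of the integer part and the base-$P_k$ behaviour of the fractional part of each rational. For every relevant minimal base $P_t$, $\alpha$ will record the pair $(e_2(i),s_2(i))$ of the integer part $i=\lfloor x\rfloor$ under the three colourings $\Phi$, $\Psi$ and $\Psi'$ of Section~2 (together with the left gap of $i$ in binary), the end $e_t(f)$ of the fractional part $f=x-\lfloor x\rfloor$ in base $P_t$, the last base-$P_t$ digit of $f$, and a marker for whether $x$ is an integer. Since $\alpha$ refines $\mu$, Theorem~\ref{wasunlabelled} continues to handle bounded sequences whose denominators use only finitely many primes, so the real task is in the unbounded case.

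Suppose for contradiction that $(x_n)$ is unbounded, that all its finite sums and products are monochromatic for $\alpha$, and that every $x_n$ has denominator using only primes from $\{p_1,\dots,p_k\}$. By passing to a subsequence I may assume that the $x_n$ share a common minimal base $P_t$ with $t\le k$, share a common last base-$P_t$ digit $d$, and satisfy $x_n\to\infty$. Write $x_n=i_n+f_n$ with $i_n\in\mathbb N$ and $f_n\in[0,1)\cap T_t$ (or $f_n=0$ in a reducible subcase that can be treated as in Theorem~\ref{theoremN}).

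The heart of the argument is to use monochromaticity of finite sums to clear out the centres: comparing the base-$P_t$ end-coordinate of $\alpha$ on $x_n$ with that on $x_n+x_m$, $x_n+x_m+x_p$ and longer finite sums should force either $f_n\to 0$ or $f_n\to 1$ (the second case handled by passing to $1-f_n$ and invoking Lemma~\ref{Marialemma3} in place of Lemma~\ref{Marialemma1}), and analogous comparisons should force the binary ends $e_2(i_n)$ to tend to $+\infty$. After further passing to a subsequence I would arrange that the binary supports of the $i_n$ are pairwise right-to-left disjoint and that the $f_n$ decay so fast that the cross-terms $i_nf_m$, $i_mf_n$ and $f_nf_m$ are negligible relative to $i_ni_m$. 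This last reduction is the main obstacle, because pairwise products do not sit inside any sum subsystem and finite sums do not sit inside any product subsystem, so we cannot pass to an algebraic subsystem that preserves both structures. Instead one must read colour constraints directly from the original sequence and exploit the fact that $P_t$ is squarefree (so $P_t\nmid d^2$, and multiplication behaves cleanly at the bottom of the base-$P_t$ expansion) to pin down the decay of the fractional parts.

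Once the reduction is complete, for $n<m$ the disjoint-support property gives $e_2(i_n+i_m)=e_2(i_n)$ and $s_2(i_n+i_m)=s_2(i_m)$, while $e_2(i_ni_m)=e_2(i_n)+e_2(i_m)$ and $s_2(i_ni_m)=s_2(i_n)+s_2(i_m)$ up to a carry of $0$ or $1$; the smallness of the fractional parts transfers these identities to $x_n+x_m$ and $x_nx_m$ in the colour coordinates of $\alpha$. Setting $a_n=e_2(i_n)$ and $b_n=s_2(i_n)$, both strictly increasing with $a_n<b_n$, the monochromaticity of the $\Phi$-, $\Psi$- and $\Psi'$-coordinates of $\alpha$ reduces to one of the three patterns forbidden by Lemma~\ref{Marialemma1}, Lemma~\ref{Marialemma2} or Lemma~\ref{Marialemma3} according to the value of the carry, giving the desired contradiction.
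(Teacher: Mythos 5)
There is a genuine gap, and it sits exactly where you flag ``the main obstacle'': the transfer of the identity $e_2(i_ni_m)=e_2(i_n)+e_2(i_m)$ from the integer parts to the actual products $x_nx_m$. Writing $x_n=i_n+f_n$, we have $x_nx_m=i_ni_m+i_nf_m+i_mf_n+f_nf_m$, and for a fixed $n$ the cross-term $i_mf_n$ tends to infinity with $m$ (unless $f_n=0$), since the sequence is unbounded. No rate of decay of the $f_n$ can prevent this, because $f_n$ is fixed before the later $i_m$ are seen; the cross-terms are only small \emph{relative} to $i_ni_m$, which controls the binary start $s_2(\lfloor x_nx_m\rfloor)$ up to a carry but gives no control whatsoever over the binary end $e_2(\lfloor x_nx_m\rfloor)$, whose low-order digits are scrambled by adding $\lfloor i_nf_m+i_mf_n+f_nf_m\rfloor$. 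Since your intended application of Lemma~\ref{Marialemma1}/\ref{Marialemma2}/\ref{Marialemma3} takes $a_n=e_2(i_n)$ as the coordinate that must add under multiplication, the whole reduction collapses at this point, and appealing to squarefreeness of $P_t$ does not help: squarefreeness controls the base-$P_t$ valuation of a product of the \emph{full} rationals (as in Theorem~\ref{wasunlabelled}), not the $2$-adic valuation of the \emph{floor} of a product.

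The paper's proof avoids this trap by never trying to make binary ends additive under products in the unbounded regime. Instead it colours by mixed-scale comparisons: indicator bits ($q,q',s,s'$ in the colouring $\alpha'$) recording whether the binary left gap $a(x)-b(x)$ (or $a(x)-c(x)$) exceeds the base-$P_r$ end $e_r(\lfloor x\rfloor)$ (or $e_r(\lfloor x\rfloor+1)$). After using sums of pairs (via the parities of $a(\Frac(x))$, $\epsilon(\Frac(x))$, $e_r$, $e_2$) and products (via $a(r(x))\bmod 3$, or a product of many terms) to force $f_n\to0$ or $1$ and the gap to infinity, it fixes $x_1$ and lets $n\to\infty$: for sums $x_1+x_n$ the gap blows up while $e_r(\lfloor x_1+x_n\rfloor)$ stays fixed at $e_r(y_1)$, whereas for products $x_1x_n$ the base-$P_r$ end of the integer part blows up (here the fractional end of $x_1$ in base $P_r$ does the work) while the gap stays bounded by that of $x_1$. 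This one-bounded-one-unbounded comparison is what replaces Lemma~\ref{Marialemma1} in the unbounded case; if you want to salvage your outline, you need a substitute mechanism of this kind rather than a faster-decaying choice of the $f_n$.
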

\begin{proof} Let $S_n=\{x\in\mathbb Q^+: x$ has a terminating base $P_n$ expansion$\}$ for all $n>0$, and $S_0=\emptyset$. We first define the colouring $\alpha'$ of $\mathbb Q^+\setminus(\mathbb N\cup\{2^k:k\in\mathbb Z\}\cup(0,2])$ as follows: for $x\in S_r
\setminus S_{r-1}$ we set \begin{center}$\alpha'(x)=(a(x)\text{ mod } 2,a(\Frac(x))\text{ mod }2, \epsilon(\Frac(x))\text{ mod }2, e_r(\lfloor x\rfloor)\text{ mod }2, e_2(\lfloor x\rfloor)\text{ mod }2,\newline e_r(\lfloor x\rfloor+1)\text{ mod }2,e_2(\lfloor x\rfloor+1)\text{ mod }2,a(r(x))\text{ mod }3,p(x),q(x), q'(x),s(x),s'(x))$,\end{center} where $1-2^{\epsilon(\Frac(x)})\leq \Frac(x)<1-2^{\epsilon(\Frac(x))-1}$,  and as before $e_r(x)$ is the position of the rightmost significant digit in base $P_r$ and $e_2(x)$ is the position of the rightmost significant digit in binary,  and also $r(x)=\dfrac{x-2^{a(x)}}{2^{a(x)}}$, $p(x)$ is 0 if $\lfloor x\rfloor$ is a power of 2 and 1 otherwise, $q(x)$ is 0 if $a(x)-b(x)>e_r(\lfloor x\rfloor)$ and 1 otherwise, $q'(x)$ is 0 if $a(x)-b(x)>e_r(\lfloor x\rfloor+1)$ and 1 otherwise, $s(x)$ is 0 if $a(x)-c(x)>e_r(\lfloor x\rfloor)$ and 1 otherwise, $s'(x)$ is 0 if $a(x)-c(x)>e_r(\lfloor x\rfloor+1)$ and 1 otherwise. Here $\lfloor x\rfloor$ and $\Frac(x)$ are the integer and the fractional parts of $x$ respectively. 
\par We are now ready to define the colouring $\alpha$. Let $x\in\mathbb Q^+$. Then $\alpha(x)=(0,\theta(x))$ if $x\in\mathbb N$, $\alpha(x)=1$ if $x\in\{2^k:k\in\mathbb Z,k<0\}$, $\alpha(x)=2$ if $x\leq2$, $x\notin\mathbb N$ and $x\notin\{2^k:k\in\mathbb Z, k<0\}$, and $\alpha(x)=(1,\alpha'(x))$ otherwise. 
\par Suppose for a contradiction that a sequence as specified in the statement of the theorem exists. Since it is unbounded, we may assume that all its terms are greater than 2. Since $\theta$ prevents any sequence of natural numbers from having monochromatic pairwise sums and products, we may assume, by passing to a subsequence, that none of the $x_n$ are natural numbers -- and hence, since the set of the finite sums and products is monochromatic, also no finite sum or product of the $x_n$ is a natural number. Moreover, by looking at sums of two terms, it is easy to see that $p$ prevents the integer parts from being powers of 2, and thus we can assume that no $x_n$ has its integer part a power of 2. By assumption, and after passing to a subsequence, we may assume that there exists $r\in\mathbb N$ such that $x_n\in S_r\setminus S_{r-1}$ for all $n$. Since $S_r\setminus S_{r-1}$ is closed under multiplication, all the finite products are in $S_r\setminus S_{r-1}$ too.
\par Let $x_n=y_n+z_n$, where $y_n\in\mathbb N$ is the integer part of $x_n$ and $0<z_n<1$ is its fractional part. By passing to a subsequence we may assume that the sequence $(y_n)_{n\geq1}$ is strictly increasing and tending to infinity. Suppose that the sequence $(z_n)_{n\geq1}$ is bounded away from both 0 and 1, which is equivalent to saying that $a(z_n)$ and $\epsilon(z_n)$ are both bounded. Therefore, by passing to a subsequence, we may assume that there exist fixed integers $k<0$ and $l<1$ such that $a(z_n)=k$ and $\epsilon(z_n)=l$ for all $n$. We either have $z_n<\frac{1}{2}$ for infinitely many $n$ or $z_n\geq\frac{1}{2}$ for infinitely many $n$. 
\par In the first case, if $z_n$ and $z_m$ are less than $\frac{1}{2}$ then $\Frac(x_n+x_m)=z_n+z_m$, and thus $a(\Frac(x_n+x_m))=k+1\neq a(\Frac(x_n))\mod2$, a contradiction. In the second case, if $z_n$ and $z_m$ are at least $\frac{1}{2}$ then $\Frac(x_n+x_m)=z_n+z_m-1$, so that $1-\Frac(x_n+x_m)=1-z_n+1-z_m$ which implies that $\epsilon(\Frac(x_n+x_m))=l+1\neq \epsilon(\Frac(x_n))\mod2$, a contradiction. This tells us that, by passing to a subsequence, we may either assume that $z_n$ converges to 0 or that it converges to 1.
\par By passing to a subsequence we may assume that either $x_n<2^{a(x_n)+\frac{1}{2}}$ for all $n$ or 
$x_n\geq 2^{a(x_n)+\frac{1}{2}}$ for all $n$. In the first case $a(x_nx_m)=a(x_n)+a(x_m)$, while in the 
second case $a(x_nx_m)=a(x_n)+a(x_m)+1$ (for all $n\neq m$). Since, for $x\in\mathbb R^+\setminus(\mathbb N\cup C_1)$, 
$r(x)$ is the unique number strictly between 0 and 1 such that $x=2^{a(x)}(1+r(x))$, a simple computation shows 
that in the first case $r(x_nx_m)=r(x_n)+r(x_m)+r(x_n)r(x_m)$, while in the second case 
$r(x_nx_m)=\dfrac{r(x_n)+r(x_m)+r(x_n)r(x_m)-1}{2}$ for all $n\neq m$.
\par Suppose that $x_n<2^{a(x_n)+{\frac{1}{2}}}$ for all $n$ and that $r(x_n)$ is bounded away from $0$. Then $a(r(x_n))$ is bounded, so by  passing to a subsequence we may assume that there is an integer $l<-1$ such that $a(r(x_n))=l$ for all $n$ (Recall that we are in the case where $r(x_n)+r(x_m)+r(x_n)r(x_m)<1$ and thus $a(r(x_n))<-1$). Since $2^l\leq r(x_n)<2^{l+1}$ and $2^l\leq r(x_m)<2^{l+1}$, we have that $2^{l+1}<2^{l+1}+2^{2l}\leq r(x_n)+r(x_m)+r(x_n)r(x_m)<2^{l+1}+2^{2l+2}<2^{l+3}$. Thus $a(r(x_nx_m))$ is $l+1$ or $l+2$, neither of which is congruent to $l$ mod 3, a contradiction. Therefore in this first case (namely when $x_n<2^{a(x_n)+{\frac{1}{2}}}$ for all $n$), 
we must have that $r(x_n)$ converges to 0, which immediately implies that $a(x_n)-b(x_n)$ (the `left gap') goes to infinity.
\par Suppose instead that we are in the second case (namely that $x_n\geq 2^{a(x)+{1\over 2}}$ for all $n$), so that  $r(x_nx_m)=\dfrac{r(x_n)+r(x_m)+r(x_n)r(x_m)-1}{2}$
for all $n\neq m$. Suppose that $a(x_n)-c(x_n)$ is bounded. By passing to a subsequence, we may assume that there 
exists a fixed $l\in\mathbb N$ such that $a(x_n)-c(x_n)=l$ for all $n$. Let $2k-2<d\in\mathbb N$ be such that $\dfrac{(2^{k+1}-1)^d}{2^{(k+1)d}}<\dfrac{1}{2}$, and look at the first $d$ terms. We have that $x_j<2^{a(x_j)+1}-2^{c(x_j)}=2^{a(x_j)+1}-2^{a(x_j)-k}=2^{a(x_j)}\dfrac{2^{k+1}-1}{2^k}$, so that we have $x_1x_2\cdots x_d<2^{a(x_1)+\ldots+a(x_d)}\dfrac{(2^{k+1}-1)d}{2^{kd}}<2^{a(x_1)+\ldots+a(x_d)+k-1}$. On the other hand, by assumption, the product is at least $2^{a(x_1)+\cdots +a(x_d)+\frac{d}{2}}>2^{a(x_1)+\cdots +a(x_d)+k-1}$, a contradiction. Therefore we may assume that $a(x_n)-c(x_n)$ is strictly increasing and goes to infinity, which is equivalent to $r(x_n)$ converging to 1.
\par To summarise, we either have $r(x_n)$ converging to 0, which is equivalent to $a(x_n)-b(x_n)$ going to infinity, or $r(x_n)$ converging to 1, which is equivalent to $a(x_n)-c(x_n)$ going to infinity. We distinguish these two cases.\\
\par\textbf{Case 1.} The sequence $(z_n)_{n\geq1}$ converges to 0. In this case, by passing to a subsequence we may assume that the terms of the sequence $(z_n)_{n\geq1}$ have pairwise left to right disjoint supports in base $P_r$ -- note that this implies that all finite sums of $(x_n)_{n\geq1}$ also have minimal base $P_r$. By passing to a subsequence we may assume that all $y_n$ have the same digit in position $e_r(y_n)+1$ in base $P_r$, and that $z_n<\frac{1}{P_r}$ for all $n$. Suppose that there exist $P_r$ terms such that their integer parts end at the same position in base $P_r$, call it $p$. It is easy to see that the integer part of their sum is the sum of their integer parts, which ends at position $p+1$, a contradiction. Therefore we may assume that the terms of the sequence $(y_n)_{n\geq1}$ have left to right disjoint supports in base $P_r$. By exactly the same argument (looking at a sum of two terms only) we can further deduce that the terms of the sequence $(y_n)_{n\geq1}$ have left to right disjoint supports in binary as well. 
\par Assume first that $r(x_n)$ converges to 0. We fix $x_1$ and look at $x_1+x_n$. For $n$ sufficiently large we have $q(x_1+x_n)=0$, because the left gap of the sum is the left gap of $x_n$, while the end position of $\lfloor x_1+x_n\rfloor$ in base $P_s$ is fixed, namely the end position of $y_1$ in base $P_r$. On the other hand, if the fractional part of $x_1$ has end position $a<0$ in base $P_r$ and $n$ is large enough, then 
$\lfloor x_1x_n\rfloor$ has end position $e_r(y_n)+a$ in base $P_r$, which tends to infinity as $n$ tends to infinity. However, due to the fact that the left gap of $x_n$ goes to infinity, we see that for $n$ large enough the left gap of $x_nx_1$ equals the left gap of $x_1$, which will eventually be less than $e_r(y_n)+a$. So $q(x_1x_n)=1$, a contradiction.
\par Assume now that $r(x_n)$ converges to 1. As before, we fix $x_1$ and look at $x_n+x_1$ for $n$ large enough. Since $x_n$ and $x_1$ have disjoint supports in binary, we have that $a(x_n+x_1)=a(x_n)$, and thus $r(x_n+x_1)=\dfrac{x_n+x_1-2^{a(x_n)}}{2^{a(x_n)}}$ which converges to 1. Therefore, as $n$ tends to infinity, $a(x_n+x_1)-c(x_n+x_1)$ also tends to infinity -- thus it will eventually be greater that the end position of $\lfloor x_n+x_1\rfloor$ in base $P_r$ (which is the end position of $y_1$ in base $P_r$), so $s(x_n+x_1)=0$ for all $n$ large enough. On the other hand, it is a straightforward computation to show that $a(x_nx_1)-c(x_nx_1)$ is either $a(x_1)-c(x_1)$ or $a(x_1)-c(x_1)+1$, and thus is 
bounded. However, we have seen above that $e_r(\lfloor x_nx_1\rfloor)$ is unbounded. We conclude that for all $n$ sufficiently large we have $a(x_nx_1)-c(x_nx_1)<e_r(\lfloor x_nx_1\rfloor)$, and thus $s(x_nx_1)=1$ for all $n$ sufficiently large, a contradiction. This concludes Case 1.\\
\par\textbf{Case 2.} The sequence $(z_n)_{n\geq1}$ converges to 1. In this case we have that $x_n=y_n+1-(1-z_n)$ and the sequence $(1-z_n)_{n\geq1}$ converges to 0. With the same type of argument as the one presented above, we may assume that the terms of the sequence $(y_n+1)_{n\geq1}$ have pairwise left to right disjoint supports in binary and in base $P_r$, and the sequence is strictly increasing (it suffices to show that we cannot have infinitely many terms ending at the same place in binary or in base $P_r$). Since the full argument for base $P_r$ has been given above, here we just include the argument for binary. So suppose that we have $n\neq m$ such that $e_2(y_n+1)=e_2(y_m+1)=p$ and $y_n+1$ and
$y_m+1$ have the same binary digit in position $p+1$ (which we can achieve by passing to a subsequence). Then $e_2(\lfloor x_n\rfloor +1)=p$, while $e_2(\lfloor x_n+x_m\rfloor +1)=e_2(y_n+y_m+2)=p+1$, a contradiction.
\par We observe that for any $n>1$, $e_r(\lfloor x_n+x_1\rfloor +1)=e_r((y_n+1)+(y_1+1))=e_r(y_1+1)$. Let $e_r(x_1)=u<0$ and pick $n$ such that $1-z_n<\frac{1}{x_1}$ and $e_r(y_n+1)=v_n>-u$. This implies that $0<1-(1-z_n)x_1<1$ and that $(y_n+1)x_1\in\mathbb N$. Therefore  $x_nx_1=((y_n+1)-(1-z_n))x_1=(y_n+1)x_1-(1-z_n)x_1$, and thus $\lfloor x_nx_1\rfloor +1=\lfloor x_nx_1+1\rfloor=\lfloor(y_n+1)x_1 + 1-(1-z_n)x_1\rfloor=(y_n+1)x_1$. This means that $e_r(\lfloor x_nx_1\rfloor+1)=v_n+u$ for all $n$ sufficiently large, 
so that the sequence $(e_r(\lfloor x_nx_1\rfloor))_{n\geq1}$ is unbounded.
\par To complete the proof, we show that if $x_n<2^{a(x_n)+\frac{1}{2}}$ for all $n\geq 1$ then for sufficiently large $n$ we have $q'(x_n+x_1)=0$ and $q'(x_nx_1)=1$, while if $x_n\geq 2^{a(x_n)+\frac{1}{2}}$ for all $n\geq1$ then for sufficiently large $n$ 
we have $s'(x_n+x_1)=0$ and $s'(x_nx_1)=1$.
\par Assume first that $x_n<2^{a(x_n)+\frac{1}{2}}$ for all $n\geq1$. As we have seen above, this implies that $a(x_n)-b(x_n)$ tends to infinity (and we may also assume that it is strictly increasing and $a(x_1)-b(x_1)>2$). Consequently $a(x_n+x_1)-b(x_n+x_1)$ also tends to infinity, and so is eventually larger
than $e_r(\lfloor x_n+x_1\rfloor +1)$, whence $q'(x_n+x_1)=0$ for $n$ large enough. On the other hand, since $2^{a(x_n)}+2^{b(x_n)}\leq x_n<2^{a(x_n)}+2^{b(x_n)+1}$ and $2^{a(x_1)}+2^{b(x_1)}\leq x_1<2^{a(x_1)}+2^{b(x_1)+1}$, we have that $2^{a(x_n)+a(x_1)}+2^{a(x_n)+b(x_1)}<x_nx_1<2^{a(x_n)+a(x_1)}+2^{a(x_n)+b(x_1)+1}+2^{a(x_1)+b(x_n)+1}+2^{b(x_n)+b(x_1)+2}
<2^{a(x_n)+a(x_1)}+2^{a(x_n)+b(x_1)+2}$. This is because $b(x_n)+b(x_1)+2<a(x_1)+b(x_n)+1<a(x_n)+b(x_1)+1$.
Therefore $b(x_nx_1)$ is either $a(x_n)+b(x_1)$ or $a(x_n)+b(x_1)+1$, and thus $a(x_nx_1)-b(x_nx_1)\leq a(x_1)-b(x_1)$. Since $e_r(\lfloor x_1x_n\rfloor+1)$ will eventually be greater than $a(x_1)-b(x_1)$, we have that $q'(x_nx_1)=1$ for $n$ large enough, a contradiction.
\par Finally, assume that $x_n\geq 2^{a(x_n)+\frac{1}{2}}$ for all $n\geq1$. Thus $a(x_n)-c(x_n)$ goes to infinity (and as above we may assume it to be strictly increasing and such that $a(x_1)-c(x_1)>2$), 
and consequently so does $a(x_n+x_1)-c(x_n+x_1)$. This means that $a(x_n+x_1)-c(x_n+x_1)>e_r(\lfloor x_n+x_1\rfloor +1)$ for $n$ large enough, and so $s'(x_n+x_1)=0$ for $n$ large enough. On the other hand, $2^{a(x_n)+1}-2^{c(x_n)+1}\leq x_n<2^{a(x_n)+1}-2^{c(x_n)}$ and $2^{a(x_1)+1}-2^{c(x_1)+1}\leq x_1<2^{a(x_1)+1}-2^{c(x_1)}$. This implies that $2^{a(x_n)+a(x_1)+2}-2^{a(x_n)+c(x_1)+3}\leq 2^{a(x_n)+a(x_1)+2}-2^{a(x_n)+c(x_1)+2}
-2^{a(x_1)+c(x_n)+2}+2^{c(x_n)+c(x_1)+2}<x_nx_1<2^{a(x_n)+a(x_1)+2}-2^{a(x_n)+c(x_1)+1}$.
Here the first inequality holds because $a(x_n)+c(x_1)+2>a(x_1)+c(x_n)+2$, which implies that $2^{a(x_n)+c(x_1)+2}+2^{a(x_1)+c(x_n)+2}<2^{a(x_n)+c(x_1)+3}$. Therefore $c(x_nx_1)$ is either $a(x_n)+c(x_1)+1$ or $a(x_n)+c(x_1)+2$, and so  $a(x_nx_1)-c(x_nx_1)\leq a(x_1)-c(x_1)$. Since $e_r(\lfloor x_1x_n\rfloor+1)$ will eventually be greater than $a(x_1)-c(x_1)$, we have that $s'(x_nx_1)=1$ for $n$ large enough, a contradiction. This concludes Case 2.
\end{proof}

Note that Theorem~\ref{finitesums}, together with 
Theorem~\ref{wasunlabelled}, completes the proof of our main
result.
\begin{theorem}\label{mainresult}
There exists a finite colouring of the rational numbers with the property that there exists no sequence such that the set of its finite sums and products is monochromatic and the set of primes that divide the denominators of its terms 
is finite.\hfill$\square$
\end{theorem}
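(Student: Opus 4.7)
\par The plan is to take the product of the two colourings constructed earlier. Let $\mu$ be the colouring of $\mathbb Q^+$ from Section~5 and $\alpha$ the colouring of $\mathbb Q^+$ from Section~7. Both use only finitely many colours, so $\chi(x)=(\mu(x),\alpha(x))$ defines a finite colouring of $\mathbb Q^+$; by the convention stated in the introduction, producing such a colouring of the positive rationals is all we need.

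\par Suppose for a contradiction that there is a sequence $(x_n)_{n\geq1}$ of positive rationals whose set of finite sums and products is $\chi$-monochromatic and such that only finitely many primes divide the denominators of its terms. Projecting onto the first coordinate of $\chi$, the set $\{x_n+x_m, x_nx_m: n\neq m\}$ of pairwise sums and products is $\mu$-monochromatic. If $(x_n)_{n\geq 1}$ were bounded, then Theorem~\ref{wasunlabelled} would give that for every $k\in\mathbb N$ there exist $l$ and $n$ with $x_n\in T_l\setminus T_k$; since $T_l\setminus T_k$ is non-empty only when $l>k$, this would force some term $x_n$ to have a prime factor $p_s$ with $s>k$ in its denominator, and hence the set of primes occurring in the denominators of $(x_n)_{n\geq 1}$ to be infinite, contradicting our hypothesis. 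Therefore $(x_n)_{n\geq 1}$ must be unbounded.

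\par Now project $\chi$ onto its second coordinate: the set of all finite sums and products of $(x_n)_{n\geq 1}$ is $\alpha$-monochromatic. Since $(x_n)_{n\geq 1}$ is unbounded and only finitely many primes divide the denominators of its terms, this is exactly the situation ruled out by Theorem~\ref{finitesums}, and we obtain the desired contradiction.

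\par All of the hard work has been done in Sections~5 and~7: the proof of Theorem~\ref{mainresult} itself reduces to the observation that Theorems~\ref{wasunlabelled} and~\ref{finitesums} cover the complementary cases of bounded and unbounded sequences, so that a product colouring is enough to rule out both at once. There is no genuine obstacle at this stage, beyond checking that the hypotheses of the two earlier theorems match up, which they do automatically since finite-sum-and-product monochromaticity for $\chi$ yields both pairwise monochromaticity for $\mu$ and finite-sum-and-product monochromaticity for $\alpha$.
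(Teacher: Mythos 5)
Your proposal is correct and is essentially the paper's own argument: the paper proves Theorem~\ref{mainresult} by exactly this combination, with Theorem~\ref{wasunlabelled} (via $\mu$) ruling out bounded sequences and Theorem~\ref{finitesums} (via $\alpha$) ruling out unbounded ones, the product colouring handling both cases at once. Your extra remarks -- that pairwise monochromaticity for $\mu$ follows by projection and that $x_n\in T_l\setminus T_k$ forces a prime $p_s$ with $s>k$ in a denominator -- are accurate spellings-out of details the paper leaves implicit.
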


\section{Concluding remarks}
The first remaining problem is of course to understand what happens
with finite sums and products in the rationals. The above colourings of $\mathbb Q_{(k)}$ do rely heavily on the representation of numbers in a suitable base, and so do not pass to sequences from the whole of $\mathbb Q$.

It would be very good to find `parameters' $a$ and $b$ that would allow Lemma~\ref{Marialemma1} to be applied, or perhaps some
variant like Lemma~\ref{Marialemma2}. We have
tried to find such parameters in the rationals in general, but
have been unsuccessful. It would be extremely interesting to
decide whether or not such parameters do exist.

\Addresses

\section*{Appendix}
Here we provide the cases in the proof of Theorem~\ref{0infinity} when the colour class is $C_3$ or $C_5$. 
\begin{proposition} There does not exist an injective sequence $(x_n)_{n\geq1}$ in $\mathbb R^+$ such that the set of all its pairwise sums and products is contained in $C_3=\{2^k+2^l:k,l\in\mathbb Z$ and $l<k\}$.
\end{proposition}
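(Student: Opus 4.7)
The plan is to argue by contradiction. Suppose there is an injective sequence $(x_n)_{n\geq1}$ in $\mathbb R^+$ with $x_n+x_m,\,x_nx_m\in C_3$ for every $n\neq m$. After passing to a subsequence I may assume $(x_n)$ is strictly monotone and converges in $[0,\infty]$ to some $L$; I treat the cases $L\in(0,\infty)$, $L=0$, and $L=\infty$ separately.

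For bounded $L$, fix $m$ and write $x_n+x_m=2^{k_n}+2^{l_n}$ with $l_n<k_n$. Boundedness of the sums forces $k_n$ eventually constant, say $K_m$, and injectivity of $(x_n)$ then forces $l_n\to-\infty$, so $x_m+L=2^{K_m}$. When $L=0$ this immediately says every $x_m$ is a power of $2$, whence $x_mx_{m'}$ is a power of $2$ and hence not in $C_3$, a contradiction. When $L\in(0,\infty)$, the analogous analysis for $x_nx_m=2^{p_n}+2^{q_n}$ gives $Lx_m=2^{P_m}$, so $(x_m,L)$ are the two roots of $T^2-2^{K_m}T+2^{P_m}$. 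Writing $L=a\cdot 2^d$ with $a$ odd and examining the odd part of the resulting equation $a^2\cdot 2^d+2^{P_m-d}=a\cdot 2^{K_m}$ forces $a=1$, i.e.\ $L$ itself is a power of $2$; but then $2^{K_m-d}-1=2^{P_m-2d}$ admits at most one solution $(K_m,P_m)\in\mathbb Z^2$, contradicting the existence of infinitely many distinct $x_m$.

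For $L=\infty$, fix $m$ so that $k_n,p_n\to\infty$ with $n$, and divide the identity $x_m(x_n+x_m)-x_m^2=x_mx_n$ by $2^{k_n}$:
\begin{equation*}
x_m\bigl(1+2^{l_n-k_n}\bigr)-x_m^2\cdot 2^{-k_n}=2^{p_n-k_n}+2^{q_n-k_n}.
\end{equation*}
Comparing magnitudes shows $p_n-k_n$ is a bounded integer sequence, so after a subsequence it is constant, say $D$; on a further subsequence each of $l_n-k_n,\,q_n-k_n$ either stabilises at a negative integer or tends to $-\infty$. If $l_n-k_n$ stabilised at a finite $\Lambda$, the left-hand side would retain an $n$-dependent error $-x_m^2\cdot 2^{-k_n}$ that the right-hand side cannot match, a contradiction; hence $l_n-k_n\to-\infty$. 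If $q_n-k_n$ stabilised at a finite $Q$, the limit equation gives $x_m=2^D+2^Q$, and substituting back forces $2^{l_n}=2^D+2^Q$ for every large $n$, impossible since $2^D+2^Q$ is not a power of $2$. Hence $q_n-k_n\to-\infty$ as well, and the limit equation yields $x_m=2^D$, a power of $2$. Thus every $x_m$ is a power of $2$, so $x_mx_{m'}$ is a power of $2$ and not in $C_3$, a final contradiction.

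The main obstacle is the case analysis in the $L=\infty$ regime, in particular the two delicate sub-steps showing that both $l_n-k_n$ and $q_n-k_n$ must diverge to $-\infty$: the crucial observation is that a generic $C_3$-element fails to be a power of $2$, so any stabilisation of an exponent-difference forces an integer-valued logarithm of a non-power-of-$2$, which is impossible.
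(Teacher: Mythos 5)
Your argument takes a genuinely different route from the paper's (which reduces to dyadic rationals and analyses binary digit positions of starts and ends, with a Ramsey/pigeonhole step), and most of it is sound: the reduction to a strictly monotone subsequence with limit $L\in[0,\infty]$, the case $L=0$, and the case $L=\infty$ all check out. In the $L=\infty$ case your two sub-steps are stated very tersely, but they do survive scrutiny: for instance, if $l_n-k_n=\Lambda$ were eventually constant, rewriting the identity as $x_m(1+2^{\Lambda})-2^{D}=x_m^2\cdot 2^{-k_n}+2^{q_n-k_n}$ makes the left side constant while $x_m^2\cdot 2^{-k_n}$ tends to $0$ through infinitely many distinct values, forcing the powers of two $2^{q_n-k_n}$ to take infinitely many distinct values while converging, which is impossible; a sentence of this kind should really appear in the write-up.

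The genuine gap is in the case $L\in(0,\infty)$: you write $L=a\cdot 2^{d}$ with $a$ odd. At that point $L$ is merely the limit of your sequence, an arbitrary positive real; nothing established so far shows that $L$ is rational, let alone an odd integer times a power of $2$, and for irrational $L$ the whole ``odd part'' computation is meaningless. The step can be repaired: subtracting the relations $L^2+2^{P_m}=L\cdot 2^{K_m}$ for two indices $m\neq m'$ (note $K_m\neq K_{m'}$ because $x_m\neq x_{m'}$) gives $L=(2^{P_m}-2^{P_{m'}})/(2^{K_m}-2^{K_{m'}})$, so $L$ is rational; but then $a$ is only a quotient of two odd integers, and your valuation argument must be redone with $L=2^{d}u/v$, $u,v$ odd, which does still work. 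In fact your own set-up admits a much shorter exit from this case, avoiding the quadratic altogether: from $x_m+L=2^{K_m}$ and $x_m\to L$ you get $2^{K_m}\to 2L>0$, so the integer sequence $(K_m)$ is eventually constant, whence $x_m=2^{K_m}-L$ is eventually constant, contradicting injectivity. With that repair the proof is complete, and arguably more elementary than the paper's digit-position argument.
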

\begin{proof}
Assume for a contradiction that such a sequence $(x_n)_{n\geq1}$ exists. It is easy to see that if $x<y<z$ are three positive real such that $\{x+y,x+z,y+z\}\subseteq C_3$  then
$\{x,y,z\}\subseteq\mathbb Q_{(2)}$, and so $x_n\in\mathbb Q_{(2)}$ for all $n\geq1$.
\par We know that the set $\{x_n:n\in\mathbb N\}\cap\{2^k:k\in\mathbb Z\}$ is finite, otherwise we get a contradiction as the product of two powers of 2 does not lie in $C_3$. We may therefore assume that no $x_n$ is a power of 2.
\par Suppose first that $x_n\in(0,1)$ for all $n\geq 1$. Suppose that $\{s_2(x_n):n\in\mathbb N\}$ is infinite. We may pick $n$ such that $s_2(x_n)<e_2(x_1)$, but then the binary expansion of $x_1+x_n$ has at least four nonzero digits, and thus $x_1+x_n\notin C_3$, a contradiction. We may therefore assume (after passing to a subsequence) that there exists $k\in\mathbb Z$ (with $k<0$) such that $s_2(x_n)=k$ for every $n\geq1$. Then each $x_n=2^k+y_n$ where $s_2(y_n)<k$. Since there are only finitely
many numbers with given values of $s_2(x)$ and $e_2(x)$, by passing to a subsequence we may also assume that $e_2(y_n)>e_2(y_{n+1})$ for all $n\geq1$. We now observe that if $n<m$ then $s_2(x_n+x_m)=k+1$ and $e_2(x_n+x_m)=e_2(x_m)$, so $x_n+x_m$ has a nonzero digit at positions $k+1$ and  $e(x_m)$, and thus, since it is in $C_3$, we have $x_n+x_m=2^{k+1}+2^{e(x_m)}$. But then
$x_1+x_3=x_2+x_3$, a contradiction.
\par We may therefore assume that $x_n>1$ for all $n\geq 1$. By Ramsey's theorem for pairs, we may assume either that for all $n\neq m$ we have $x_n+x_m\in\{2^k+2^l:k,l\in \mathbb Z$ and $0\leq l<k\}$ or that for all $n\neq m$ we have $x_n+x_m\in\{2^k+2^l:k,l\in \mathbb Z$ and $l<0<k\}$.\\

\par\textbf{Case 1.} For all $n\neq m$ we have $x_n+x_m\in\{2^k+2^l:k,l\in \mathbb Z$ and $0\leq l<k\}$.
\par Let $y_n=\lfloor x_n\rfloor$ and $\alpha_n=x_n-y_n$ for all $n\geq1$. Given $n\neq m$, we have $x_n+x_m=y_n+y_m+\alpha_n+\alpha_m$, and so $\alpha_n+\alpha_m\in \{0,1\}$. If $n$, $m$ and $r$ are pairwise distinct and $\alpha_n,\alpha_m,\alpha_r\notin\{0,\frac{1}{2}\}$, then some two are in $(0,\frac{1}{2})$ or some two are in $(\frac{1}{2},1)$, a contradiction. Hence, for all but at most two values of $n$, we have $\alpha_n\in\{0,{1\over 2}\}$. If $n\neq m$ and $\alpha_n=\alpha_m=\frac{1}{2}$, then $x_n\cdot x_m\notin\mathbb N$, again a contradiction. We may therefore assume that $\alpha_n=0$ for all $n\geq1$.
\par Since no $x_n$ is a power of 2, $\{e_2(x_n):n\in\mathbb N\}$ is finite. The reasoning is similar to that presented above: if $e_2(x_n)>s_2(x_1)$ then the binary expansion of $x_1+x_n$ has at least four nonzero digits. We may therefore assume that there exists $k$ such that $e_2(x_n)=k$ for all $n\geq1$. By passing to a subsequence, we may further assume that either each $x_n$ end in $01$ or each $x_n$ ends in $11$, so that $e_2(x_n+x_m)=k+1$. Moreover, we may also assume that $s_2(x_n)<s_2(x_{n+1})$ for all $n\geq1$.
\par We now see that if $n<m$ then $s_2(x_n+x_m)=s_2(x_m)$ or $s_2(x_n+x_m)=s_2(x_m)+1$. Pick $i\neq j$ in $\{1,2,3\}$ and $t\in\{0,1\}$ such that $s_2(x_i+x_4)=s_2(x_4)+t$ and $s_2(x_j+x_4)=s_2(x_4)+t$. Since $k+1<s_2(x_4)+t$ are two positions of nonzero digits, we must have $x_i+x_4=x_j+x_4=2^{s_2(x_4)+t}+2^{k+1}$, a contradiction\\
\par\textbf{Case 2.} For all $n\neq m$ we have $x_n+x_m\in\{2^k+2^l:k,l\in \mathbb Z$ and $l<0<k\}$.
\par In this case, for all $n\neq m$, $x_n+x_m$ has one nonzero digit to the right of the decimal point and one nonzero digit to the left of the decimal point.
\par Suppose first that $\{e_2(x_n):n\in\mathbb N\}$ is unbounded. By passing to a subsequence, we may assume that $0>e_2(x_1)>e_2(x_2)>e_2(x_3)$. This implies that $x_1+x_3$ and $x_2+x_3$ each have a nonzero digit in position $e_2(x_3)$ and $x_1+x_2$ has a nonzero digit in position $e_2(x_2)$. Thus there exist $y,z,w\in\mathbb N$ such that $x_1+x_3=y+2^{e_2(x_3)}$, $x_2+x_3=z+2^{e_2(x_3)}$, and $x_1+x_2=w+2^{e_2(x_2)}$. Clearly we have that $y\neq z$. If $z>y$, then $x_2-x_1=z-y$ so $2x_2=z-y+w+2^{e_2(x_2)}$, whence $e_2(x_2)=e_2(2x_2)=e(x_2)+1$, a contradiction. If $y>z$, then $x_1-x_2=y-z$, so $2x_1=y-z+w+2^{e_2(x_2)}$, giving $e_2(x_2)=e_2(2x_1)=e_2(x_1)+1>e_2(x_2)$, again a contradiction.
\par Hence $\{e_2(x_n):n\in\mathbb N\}$ is bounded. Thus $\{s_2(x_n):n\in\mathbb N\}$ has to be unbounded. We may therefore assume that there exists $k<-1$ such that $e_2(x_n)=k$ for all $n\geq 1$. (If $e_2(x_n)=e_2(x_m)=-1$ then $x_n+x_m\in\mathbb N$.)
By passing to a subsequence, we may also assume that all terms of the sequence have the same digit in position $k+1$, and for all $n\neq m$ we have $e_2(x_n+x_m)=k+1$.
\par We may further assume that $s_2(x_1)<s_2(x_2)<s_2(x_3)<s_2(x_4)$. For $i\in\{1,2,3\}$, $x_i+x_4$ has a nonzero digit in position $s_2(x_4)$ or in position $s_2(x_4)+1$. Pick $i\neq j$ in $\{1,2,3\}$ and $t\in \{0,1\}$ such that $x_i+x_4$ and $x_j+x_4$ each have a nonzero digit in position $s_2(x_4)+t$. Then $x_i+x_4=x_j+x_4=2^{s_2(x_4)+t}+2^{k+1}$, a contradiction.
\end{proof}
\begin{proposition} There does not exist an injective sequence $(x_n)_{n\geq1}$ in $\mathbb R^+$ such that the set of all its pairwise sums and products is contained in $C_5=\{2^{k+1}(1-2^{l-k})^{\frac{1}{2}}:k,l\in\mathbb Z$ and $l<k\}$.
\end{proposition}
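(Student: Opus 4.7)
I would prove this with a short algebraic argument, exploiting the fact that squaring sends $C_5$ into $C_4 \subseteq \mathbb{Q}$: for $y = 2^{k+1}\sqrt{1 - 2^{l-k}} \in C_5$ we have $y^2 = 2^{2k+2} - 2^{k+l+2} \in C_4$.

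First, for any three distinct indices $n, m, p$, the identity $x_n^2 = (x_nx_m)(x_nx_p)/(x_mx_p)$ expresses $x_n^2$ as a quotient of elements of $C_5$, and squaring gives
\[
x_n^4 \;=\; \frac{(x_nx_m)^2(x_nx_p)^2}{(x_mx_p)^2} \;\in\; \mathbb{Q}.
\]
Set $q_n = x_n^4 \in \mathbb{Q}^+$. Next I would rule out having two distinct indices $n, m$ with both $x_n^2$ and $x_m^2$ in $\mathbb{Q}$: since $(x_n+x_m)^2 \in \mathbb{Q}$ and $(x_n+x_m)^2 = x_n^2 + x_m^2 + 2x_nx_m$, this would force $x_nx_m \in \mathbb{Q}$, contradicting $x_nx_m \in C_5$. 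After discarding at most one term, I may assume no $q_n$ is a rational square.

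Thirdly, since $(x_nx_m)^2 = \sqrt{q_nq_m}$ must lie in $\mathbb{Q}$, each product $q_nq_m$ is a rational square; because $\mathbb{Q}^+/(\mathbb{Q}^+)^2$ is an elementary abelian $2$-group, this forces all $q_n$ to share a single class $[s]$, where $s$ is a square-free positive rational with $s \neq 1$. Writing $q_n = st_n^2$ with $t_n \in \mathbb{Q}^+$, we get $x_n = s^{1/4}\sqrt{t_n}$, and hence
\[
(x_n+x_m)^2 \;=\; \sqrt{s}\,\bigl(t_n + t_m + 2\sqrt{t_nt_m}\bigr),
\]
which we know must lie in $\mathbb{Q}$.

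Finally, I would derive the contradiction. If $t_nt_m$ is a rational square, the bracketed factor is a positive rational, so $(x_n+x_m)^2$ is a nonzero rational multiple of the irrational $\sqrt{s}$. Otherwise, writing $\sqrt{t_nt_m} = u\sqrt{f}$ with $f$ square-free and $u \in \mathbb{Q}^+$, the expression becomes $(t_n + t_m)\sqrt{s} + 2u\sqrt{sf}$. When $s = f$ we have $\sqrt{sf} = s \in \mathbb{Q}$, so $(t_n+t_m)\sqrt{s}$ is the only irrational part; when $s \neq f$, the set $\{1,\sqrt{s},\sqrt{f},\sqrt{sf}\}$ is a $\mathbb{Q}$-basis of the biquadratic field $\mathbb{Q}(\sqrt{s},\sqrt{f})$. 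In either case rationality forces $t_n + t_m = 0$, contradicting $t_n, t_m > 0$. I do not foresee a serious obstacle: the irrationality of $C_5$-elements makes the argument purely algebraic, in contrast to the delicate binary combinatorics required for $C_3$ and $C_4$.
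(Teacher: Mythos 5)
Your proof is correct, but it takes a genuinely different route from the paper's. Both arguments rest on exactly the same two facts about $C_5$ -- every element has rational square and no element is rational -- but the paper works with just three terms $x_1,x_2,x_3$ and splits according to whether a product of two pairwise sums, such as $(x_1+x_2)(x_1+x_3)$, is rational: in the irrational case it runs a field-tower argument, using the expansion of $(\alpha+\beta-\gamma)^4=16x_1^4$ to force $x_2+x_3\in\mathbb Q(x_1+x_2,x_1+x_3)$, and in the rational case it applies a trace computation in $\mathbb Q(\sqrt{q_1},\sqrt{q_2},\sqrt{q_3},\sqrt{q_4})$ to force the relevant rationals to be perfect squares. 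You instead note that all $x_n^4$ are rational and (after discarding at most one term) lie in a single non-trivial class of $\mathbb Q^+/(\mathbb Q^+)^2$, write $x_n=s^{1/4}\sqrt{t_n}$, and extract the contradiction from $(x_n+x_m)^2=\sqrt s\,\bigl(t_n+t_m+2\sqrt{t_nt_m}\bigr)\in\mathbb Q$ together with the linear independence of $1,\sqrt s,\sqrt f,\sqrt{sf}$ over $\mathbb Q$; this avoids both the quartic expansion and the trace argument. Two cosmetic points: your ``square-free positive rational'' $s$ should simply be the square-free positive integer representing the common square class, and in the sub-case $s\neq f$ the product $sf$ need not be square-free, though the basis claim for the biquadratic field $\mathbb Q(\sqrt s,\sqrt f)$ (equivalently, independence of $1,\sqrt s,\sqrt g$ with $g$ the square-free part of $sf$) is still valid. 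Like the paper's proof, yours applies verbatim to any set of irrational numbers whose squares are rational, so nothing specific to the shape $2^{k+1}(1-2^{l-k})^{1/2}$ is lost.
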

\begin{proof} Assume for a contradiction that such a sequence $(x_n)_{n\geq1}$ exists. Let $\alpha$, $\beta$, $\gamma$ be three numbers in $C_5$ such that $x_1+x_2=\alpha$, $x_1+x_3=\beta$ and $x_2+x_3=\gamma$. Let also $x_1x_2=\mu$, $x_1x_3=\nu$ and $x_2x_3=\eta$, where $\mu$, $\nu$ and $\eta$ are in $C_5$. We therefore have $x_1^2=\frac{\mu\cdot\nu}{\eta}$, whence $x_1^4$ is rational.\\

\par\textbf{Case 1.} Suppose that $\alpha\cdot\beta$, $\alpha\cdot\gamma$ and $\beta\cdot\gamma$ are all irrational. Since $\alpha^2$, $\beta^2$ and $\gamma^2$ are rational, $\alpha/\beta$, $\alpha/\gamma$ and $\beta/\gamma$ are all irrational as well. It is easy to show that if $K$ and $R$ are two fields such that $\mathbb Q\subset K\subset R$ and $\delta\in R\setminus K$ is such that $\delta^2\in\mathbb Q$, then $K(\delta)=\{a+b\cdot\delta:a,b\in K\}$. Using this fact, it is straightforward to show that $\beta\notin\mathbb Q(\alpha)$, $\alpha\notin\mathbb Q(\beta)$ and $\gamma\notin\mathbb Q(\alpha,\beta)$.
\par Now, we know that $x_1^4$ is rational. On the other hand, $x_1=\frac{\alpha+\beta-\gamma}{2}$, and so $16\cdot x_1^4=(\alpha+\beta-\gamma)^4=r_0+r_1\cdot\alpha\cdot\beta-r_2\cdot\alpha\cdot\gamma-r_3\cdot\beta\cdot\gamma$, where $r_0$, $r_1$, $r_2$, and $r_3$ are positive rationals. It then follows that $\gamma\cdot(r_2\cdot\alpha + r_3\cdot\beta)=-16\cdot x_1^4 +r_0+r_1\cdot\alpha\cdot\beta$, which implies that $\gamma$ is in $\mathbb Q(\alpha,\beta)$, a contradiction. (For the conscientious reader, the coefficients are $r_0=\alpha^4+\beta^4+\gamma^4+6\cdot\alpha^2\cdot\beta^2+6\cdot\alpha^2\cdot \gamma^2+6\cdot\beta^2\cdot\gamma^2$, $r_1=4\cdot\alpha^2+4\cdot\beta^2+12\cdot\gamma^2$, $r_2=4\cdot\alpha^2+4\cdot \gamma^2+12\cdot\beta^2$ and $r_3=4\cdot\beta^2+4\cdot\gamma^2+12\cdot\alpha^2$.)\\
\par\textbf{Case 2.} Suppose now that $\alpha\cdot\beta$ is a rational number, say $q$. It is clear that $q>0$. We then have $(x_1+x_2)(x_1+x_3)=q=x_1^2+x_1x_3+x_1x_2+x_2x_3=x_1^2+\mu+\nu+\eta$. We now observe that, by the definition of $C_5$, all of its elements are square roots of positive rational numbers. Hence there exist three positive rational numbers $q_1$, $q_2$ and $q_3$, such that $\mu=\sqrt{q_1}$, $\nu=\sqrt{q_2}$ and $\eta=\sqrt{q_3}$. Moreover, since $x_1^2=\frac{\mu\cdot\nu}{\eta}$, it follows that $x_1^2$ is also a square root of a positive rational. More precisely $x_1^2=\sqrt{q_4}$ where $q_4=\frac{q_1\cdot q_2}{q_3}$.
\par We therefore have $q=\sqrt{q_1}+\sqrt{q_2}+\sqrt{q_3}+\sqrt{q_4}$. Let $M=\mathbb Q(\sqrt{q_1}, \sqrt{q_2}, \sqrt{q_3}, \sqrt{q_4})$, and let $d$ be its degree over $\mathbb Q$. On the one hand, the trace of $q$ is $d\cdot q$, and on the other had it is the sum of $d\cdot\sqrt{q_i}$ for those $q_i$ that are perfect squares. This is because, for any positive rational $t$, the trace of $\sqrt{t}$ is 0 if $t$ is not a perfect square, and $d\sqrt{t}$ if $t$ is a perfect square. The only way to have equality in the above is if all the $q_i$ are perfect squares, but then $x_1x_2\in C_5$ is rational, a contradiction.
\end{proof}
\end{document}